\newtheorem{theorem}{Theorem}[section]
\newtheorem{corollary}[theorem]{Corollary}
\newtheorem{proposition}[theorem]{Proposition}
\newtheorem{lemma}[theorem]{Lemma}
\newtheorem{lettertheorem}{Theorem}
\newtheorem{lettercorollary}{Corollary}
\newtheorem{conjecture}[theorem]{Conjecture}
\newtheorem{remark}[theorem]{Remark}
\newtheorem{definition}[theorem]{Definition}
\newtheorem{question*}{Question}
\newtheorem{problem*}{Problem}
\theoremstyle{definition}
\newtheorem{example}[theorem]{Example}
\numberwithin{equation}{section}
\newcommand{\Z}{\mathbb{Z}}
\newcommand{\R}{\mathbb{R}}
\newcommand{\GL}{\mathrm{GL}}
\newcommand{\rk}{\mathrm{rk}}
\newcommand{\Li}{\mathrm{Li}}
\renewcommand{\Im}{\mathrm{Im}}
\renewcommand{\pmod}[1]{\, (\mathrm{mod} {\, #1})}
\newcommand{\ord}{\mathop{\mathrm{ord}}}
\renewcommand{\Re}{\mathrm{Re}}
\def\res{\mathop{\mathrm{res}}}
\newcommand{\Tr}{\textbf{Tr}}
\patchcmd{\section}{\scshape}{\bfseries}{}{}
\renewcommand{\@secnumfont}{\bfseries}
\makeatletter\newcommand{\tpmod}[1]{{\@displayfalse\pmod{#1}}}
\begin{document}

\title{Euler Products at the centre and applications to Chebyshev's Bias}

\author{Arshay Sheth}

\address{Mathematics Institute, Zeeman Building, University of Warwick, Coventry, CV4 7AL, UK}

\thanks{The author is supported by funding from the European Research Council under the European Union’s Horizon 2020 research and innovation programme (Grant agreement No. 101001051 — Shimura varieties and the Birch--Swinnerton-Dyer conjecture).}
\email{arshay.sheth@warwick.ac.uk}
\urladdr{\href{https://arshaysheth.github.io}{https://arshaysheth.github.io}}

\keywords{}

\begin{abstract}
Let $\pi$ be an irreducible cuspidal automorphic representation of $\GL_n(\mathbb A_\mathbb Q)$ with associated $L$-function $L(s, \pi)$.  %We prove, under the assumption of the Riemann Hypothesis and the Ramanujan Conjecture for $L(s, \pi)$, that if we let $m=\ord_{s=\frac{1}{2} }L(s, \pi)$, then 
%$$
%(\log x)^m \cdot  \prod_{p \leq x}  \prod_{j=1}^n (1-\alpha_{j, p}p^{-\frac{1}{2} })^{-1}   \asymp 1 , 
%$$
%for all $x$ outside a set of finite logarithmic measure. Conversely, we also show, again assuming the Ramanujan Conjecture, that the above order of magnitude statement implies the Riemann Hypothesis for $L(s, \pi)$.  
We study the behaviour of the partial Euler product of $L(s, \pi)$ at the center of the critical strip. Under the assumption of the Generalized Riemann Hypothesis for $L(s, \pi)$ and assuming the Ramanujan--Petersson conjecture when necessary, we  establish an asymptotic, off a set of finite logarithmic measure, for the partial Euler product at the central point  that confirms a conjecture of Kurokawa. As an application, we obtain results towards Chebyshev's bias in the recently proposed framework of Aoki-Koyama \cite{AokiKoyama2023} . 
\end{abstract}

\maketitle
%\tableofcontents

\section{Introduction}

In this paper, we study the behaviour of partial Euler products of automorphic $L$-functions at the central point of their critical strip.  The motivation for investigating the behaviour of Euler products at the central point dates back
to the original version of the Birch and Swinnerton-Dyer conjecture.

\begin{conjecture}[Birch and Swinnerton-Dyer ~\cite{BirchSwinnertonDyer1965}] 
\label{OBSD}
Let $E/\mathbb Q$ be an elliptic curve with rank $r$ and for each prime $p$, let $N_p=\#E_{\mathrm{ns}}(\mathbb F_p)$, where $E_{\mathrm{ns}}(\mathbb F_p)$ denotes the set of non-singular $\mathbb F_p$-rational points on a minimal Weierstrass model for $E$ at $p$. Then we have that $$\prod_{p \leq x} \frac{N_p}{p}  \sim C (\log x)^{r}$$ as $x \to \infty$ for some constant $C$ depending on $E$. 
\end{conjecture}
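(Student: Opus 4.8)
This is the original, unconditional form of the Birch--Swinnerton-Dyer conjecture, and it is still open; the plan here is to describe how one reaches the portion of it accessible to the methods of this paper, and to locate the genuine obstruction. The statement decouples into two independent assertions: (i) the exponent of $\log x$ equals the analytic rank $r_{\mathrm{an}} := \ord_{s=1} L(s,E)$; and (ii), \emph{granting} that exponent, $\prod_{p \leq x} N_p/p \sim C (\log x)^{r_{\mathrm{an}}}$ for an explicit positive constant $C$. The plan is to prove (ii) under GRH for $L(s,E)$ by reducing it to the central-point Euler product asymptotic established in this paper, and then to feed in (i). The step I expect to be the real obstacle is precisely (i): it is the rank conjecture of Birch and Swinnerton-Dyer itself, for which no unconditional approach is known, so a complete proof of the conjecture as stated is out of reach, and I would isolate (ii) as the result actually obtained.

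For (ii), the first move is to pass to the automorphic setting. By the modularity theorem (Wiles, Taylor--Wiles, Breuil--Conrad--Diamond--Taylor) one has $L(s,E) = L(s,\pi)$ for a self-dual cuspidal automorphic representation $\pi$ of $\GL_2(\A_\Q)$ with trivial central character, and $\pi$ satisfies the Ramanujan--Petersson bound unconditionally by Deligne's theorem, so the ``Ramanujan when necessary'' hypothesis costs nothing in this case. Comparing local factors at every prime---good, multiplicative, and additive---yields the exact identity $\prod_{p \leq x} N_p/p = \prod_{p \leq x} L_p(1,E)^{-1}$, which in the analytic normalization is exactly the partial Euler product of $L(s,\pi)$ at the central point $s = \tfrac12$. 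The central asymptotic of this paper (confirming Kurokawa's conjecture) then gives, under GRH for $L(s,\pi)$,
\[ \prod_{p \leq x} L_p(\tfrac12,\pi)^{-1} \;\sim\; C\,(\log x)^{r_{\mathrm{an}}} \]
off a set of finite logarithmic measure, with $C$ expressed explicitly through the leading Taylor coefficient $L^{(r_{\mathrm{an}})}(\tfrac12,\pi)/r_{\mathrm{an}}!$.

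It is worth recording the mechanism that puts Kurokawa's ``extra'' factor $1/\sqrt2$ into $C$, since that is the heart of the matter. Taking logarithms and expanding $-\log L_p$ in the Satake parameters $\{\alpha_p,\beta_p\}$ (with $\alpha_p\beta_p = 1$) splits $\log \prod_{p\leq x} L_p(\tfrac12,\pi)^{-1}$ into a degree-one part $-\sum_{p\leq x}(\alpha_p+\beta_p)p^{-1/2}$, a degree-two part $-\tfrac12\sum_{p\leq x}(\alpha_p^2+\beta_p^2)p^{-1}$, and an absolutely convergent degree-$\geq 3$ tail. The order-$r_{\mathrm{an}}$ zero at $s=\tfrac12$ forces the $(\log x)^{r_{\mathrm{an}}}$ growth (via the $\sigma \to \tfrac12^+$ behaviour of $1/L(\sigma)$); the degree-$\geq 3$ tail only adds a constant; and the degree-two part, controlled near $s=1$ by $\sum_p(\alpha_p^2+\beta_p^2)p^{-s} = \sum_p a_\pi(p)^2 p^{-s} - 2\sum_p p^{-s}$, is governed by the fact that $L(s,\pi\times\bar\pi) = \zeta(s) L(s,\mathrm{Sym}^2\pi)$ and $\zeta(s)$ each have a \emph{simple} pole at $s=1$, so the two logarithmically divergent pieces nearly cancel and leave behind exactly $-\tfrac12\log 2 = \log(1/\sqrt2)$ in the constant term. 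This is Goldfeld's ``mysterious $\sqrt2$'' for elliptic curves, now upgraded to a theorem (alongside the Mertens factor $e^{r_{\mathrm{an}}\gamma}$).

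Assuming finally the rank equality $r = r_{\mathrm{an}}$, one reads off $\prod_{p \leq x} N_p/p \sim C (\log x)^r$ with $C$ as above, completing a proof. Two caveats are exactly the obstacles: the rank equality is the open Birch--Swinnerton-Dyer conjecture---the decisive one---and even the analytic half comes with an exceptional set of finite logarithmic measure; removing it, to get the asymptotic for every $x \to \infty$ as literally stated, seems to need substantially more than GRH.
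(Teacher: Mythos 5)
This statement is not a theorem of the paper; it is Conjecture~\ref{OBSD}, the original 1965 Birch--Swinnerton-Dyer conjecture, quoted as motivation, and the paper offers no proof of it (nor claims one). Your response correctly recognizes this and gives an accurate account of the relationship between the conjecture and what the paper actually establishes: the decomposition into the open rank equality $r = r_{\mathrm{an}}$ and the analytic asymptotic; the reduction $\prod_{p\le x} N_p/p = \prod_{p\le x} L_p(1,E)^{-1}$ at good, multiplicative, and additive primes; the use of modularity and Deligne to discharge the automorphicity and Ramanujan hypotheses; and the fact that Theorem~\ref{Thm1} delivers the central-point asymptotic only under GRH and only off a set of finite logarithmic measure, so that one cannot remove the conditional hypotheses or the exceptional set. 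This is exactly the paper's own framing: its closest result to Conjecture~\ref{OBSD} is Corollary~\ref{frobtrace}, which assumes both GRH and BSD and carries the exceptional set, and is the logarithmic form of the asymptotic you derive. Your account of where the constant's $1/\sqrt{2}$ comes from (near-cancellation of the simple poles of $\zeta(s)$ and $L(s,\mathrm{Sym}^2\pi)$ in the degree-two part) agrees with the paper's second-moment $L$-function explanation via Lemma~\ref{mertens} and Corollary~\ref{sqrt2}. In short: there is no proof to compare against, and your assessment of the obstruction and of what is provable is faithful to the paper.
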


This conjecture can be viewed as an assertion about the asymptotics of the partial Euler product at the central point $s=1$ of the $L$-function $L(E, s)$ attached to $E$.  If we let $N_E$ denote the conductor of $E$ and define $a_p=p+1-N_p$ for $p \nmid N_E$ and let $a_p=p-N_p$ for $p | N_E$,  $L(E, s)$ is defined for $\Re(s) > \frac{3}{2}$  by 
\begin{equation*}\label{ellipticcurveL}
L(E, s) \coloneqq \prod_{p \mid N_E} \frac{1}{1-a_{p} p^{-s}} \prod_{p \nmid N_E} \frac{1}{1-a_{p}p^{-s}+p^{1-2s}}. 
\end{equation*}
By the work of Wiles ~\cite{Wiles1995} and Breuil--Conrad--Diamond--Taylor ~\cite{BreuilConradDiamondTaylor2001},  $L(E, s)$ admits an analytic continuation to the the entire complex plane and has a functional equation relating values at $s$ and $2-s$. Defining 
$$
P_E(x)= \prod_ {\substack{p \leq x \\ p | N_E}} \frac{1}{1-a_{p} p^{-1}} \prod_{\substack{p \leq x \\ p \nmid N_E}} \frac{1}{1-a_{p}p^{-1}+p^{-1}}
$$
to be the partial Euler product at $s=1$,  Conjecture \ref{OBSD} can be reformulated to assert that 
$$
P_E(x) \sim \frac{1}{ C (\log x)^r }
$$
as $x \to \infty$.  Conjecture \ref{OBSD} has since motivated the study of partial Euler products of $L$-functions in their critical strip. For instance, if $\chi$ is a non-trivial Dirichlet character with associated Dirichlet $L$-function $L(s, \chi)$, Conrad has shown (see ~\cite[Theorem 3.3]{Conrad2005}) that the equality 
\begin{equation} \label{limpartial}
\lim\limits_{x \to \infty} \prod \limits _{p \leq x} (1-\chi(p)p^{-s})^{-1}=L(s, \chi)
\end{equation}
for all $s$ with $\textrm{Re}(s)>\frac{1}{2}$ is equivalent to the Generalized Riemann Hypothesis for $L(s, \chi)$.  More generally, in the case of entire $L$-functions, Conrad showed that the convergence of the Euler product in the right-half of the critical strip is equivalent to the Generalized Riemann Hypothesis for the $L$-function. 

It is natural to investigate the convergence of the Euler product on the critical line as well.  It is believed that, except for zeros on the critical line, Euler products of entire $L$-functions should also converge everywhere on the critical line, and the limit of the Euler product at a point on the line should equal the value of the $L$-function at the point \textit{i.e.} the analog of Equation \eqref{limpartial} should also hold everywhere on the critical line except on the set containing the zeros of the $L$-function. However, at the central point, there is often an unexpected factor of $\sqrt 2$ that is known to appear in the Euler product asymptotics. 

For instance, Goldfeld \cite{Goldfeld1982} showed that if Conjecture \ref{OBSD} is true,  then $C=\frac{r!} {L^{(r)}(E, 1)} \cdot \sqrt 2 e^{r \gamma}$, where $\gamma$ is Euler's constant; thus if $P_E(x)$, the partial Euler product at the centre, converges to a non-zero value as $x \to \infty$, then its value is $L(E, 1)/\sqrt 2$ (as opposed to simply $L(E, 1)$).  A conceptual explanation of the unexpected appearance of $\sqrt 2$ was subsquently given by Conrad in terms of second-moment $L$-functions.  If an $L$-function $L(s)$, which we henceforth assume is normalized so its centre is at $s=\frac{1}{2}$, is given by an Euler product 
$$
L(s)=\prod_p \prod_{j=1}^n (1-\alpha_{j, p}p^{ -s} )^{-1}, 
$$
its second moment $L$-function is given by 
$$
L_2(s) = \prod_p \prod_{j=1}^n (1-\alpha_{j, p}^2p^{ -s} )^{-1}
$$
and in practice is the ratio of the corresponding symmetric square $L$-function and the exterior square $L$-function.  Let $R=\ord \limits _{s=1}L_2(s)$; Conrad showed that if the Euler product at the center converges, then its value equals $ L(\frac{1}{2})/\sqrt 2^{R}$.  

\begin{example} \label{Dirichlet}
If $\chi$ is a Dirichlet character, $L_2(\chi, s)=L(\chi^2, s)$. Hence, if $\chi$ is a quadratic character, then $R=-1$; thus for a quadratic character,  if $\lim\limits_{x \to \infty} \prod \limits _{p \leq x} (1-\chi(p)p^{-1/2})^{-1}$ exists, then 

$$
\lim_{x \to \infty} \prod_{p \leq x} (1-\chi(p)p^{-1/2})^{-1}= \sqrt 2 \cdot L\left (\frac{1}{2}, \chi \right). 
$$
\end{example}

Based on the above phenomena Kurokawa \textit{et al.} (see for instance \cite{KanekoKoyamaKurokawa2022} or \cite{KimuraKoyamaKurokawa2014})  formulated a general conjecture about the convergence of partial Euler products at the centre of the critical strip.  This conjecture has been called the ``Deep Riemann Hypothesis",  since it not only implies the Generalized Riemann Hypothesis but, as we explain below, seems in a precise sense to lie deeper than the Generalized Riemann Hypothesis.

We now briefly explain this conjecture in the setting of general automorphic $L$-functions attached to an irreducible cuspidal automorphic representation $\pi$ of $\GL_n(\mathbb A_\mathbb Q)$. We choose to work in this very general setting since in the applications in Section \ref{Section4} of the paper, we consider a wide range of $L$-functions (attached to Dirichlet characters, elliptic curves and modular forms), and it is useful for us to have a general framework that encompasses these different $L$-functions. Indeed, according to the Langlands conjectures, the most general $L$-functions can all be expressed as products of $L$-functions attached to cuspidal automorphic representations of $\GL_n(\mathbb A_\mathbb Q)$, and all the $L$-functions needed in our applications are known to be examples of automorphic $L$-functions. Any automorphic $L$-function (see Section \ref{Section2}) can be written in the form
$$
L(s, \pi)=\prod_p \prod_{j=1}^n (1-\alpha_{j, p}p^{-s})^{-1},  
$$
where the $\alpha_{j, p}$'s are the Satake parameters for the local representation $\pi_p$.  We let  $\nu(\pi) = m(\mathrm{sym}^{2} \pi)-m(\wedge^{2} \pi) \in \Z$,  where $m(\rho)$ denotes the multiplicity of the trivial representation $\textbf{1}$ in $\rho$.  If we let $L_2(s, \pi)$ denote the second-moment $L$-function associated to $L(s, \pi)$,  we have that $\nu(\pi)=-R(\pi)$, where as above $R(\pi):=\ord_{s=1} L_2(s, \pi)$. Throughout this paper, we also assume that $L(s, \pi)$ is entire.

\begin{conjecture}[Kaneko-Koyama-Kurokawa \cite{KanekoKoyamaKurokawa2022}]\label{DRH}
Keep the assumptions and notation as above. Let $m = \ord_{s = 1/2} L(s, \pi)$. Then the limit
\begin{equation}\label{limit}
\lim_{x \to \infty} \left((\log x)^{m} \prod_{p \leq x} \prod_{j=1}^n \left(1-\alpha_{j, p} p^{-\frac{1}{2}} \right)^{-1} \right)
\end{equation}
satisfies the following conditions:
\begin{description}
\item[(A)] The limit~\eqref{limit} exists and is nonzero.
\item[(B)] The limit~\eqref{limit} satisfies
\begin{equation*}
\lim_{x \to \infty} \left((\log x)^{m} \prod_{p \leq x} \prod_{j=1}^n \left(1-\alpha_{j, p} p^{-\frac{1}{2}} \right)^{-1} \right)
 = \frac{\sqrt{2}^{ \nu (\pi) }}{e^{m \gamma} m!} \cdot L^{(m)} \left(\frac{1}{2}, \pi \right).
\end{equation*}
\end{description}
\end{conjecture}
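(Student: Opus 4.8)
The plan is to pass to logarithms and dissect the contribution of each prime power. Writing $\lambda_\pi(p)\coloneqq\sum_{j=1}^n\alpha_{j,p}$ and $\lambda_\pi^{(2)}(p)\coloneqq\sum_{j=1}^n\alpha_{j,p}^2$, one has
\[
\log\prod_{p\le x}\prod_{j=1}^n\bigl(1-\alpha_{j,p}p^{-1/2}\bigr)^{-1}=\sum_{p\le x}\frac{\lambda_\pi(p)}{\sqrt p}+\frac12\sum_{p\le x}\frac{\lambda_\pi^{(2)}(p)}{p}+\sum_{p\le x}\sum_{k\ge 3}\frac{1}{k}\cdot\frac{\sum_j\alpha_{j,p}^k}{p^{k/2}}.
\]
Under Ramanujan--Petersson the last sum converges absolutely to a constant as $x\to\infty$. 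Since $L_2(s,\pi)$ is (a ratio of the symmetric and exterior square $L$-functions of $\pi$, hence) holomorphic and nonvanishing on $\Re(s)\ge 1$ apart from a zero or pole of order $R(\pi)=-\nu(\pi)$ at $s=1$, a Mertens-type argument applied to its logarithmic derivative gives $\sum_{p\le x}\lambda_\pi^{(2)}(p)/p=\nu(\pi)\log\log x+C_2+o(1)$; thus the quadratic sum contributes $\tfrac12\nu(\pi)\log\log x+O(1)$, equivalently a factor $\sim(e^{\gamma}\log x)^{\nu(\pi)/2}$.

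The crux is the linear sum $S(x)\coloneqq\sum_{p\le x}\lambda_\pi(p)/\sqrt p$, which I would treat by partial summation against $\theta_\pi(x)\coloneqq\sum_{p\le x}\lambda_\pi(p)\log p$. Using the relation $\theta_\pi(x)=\psi_\pi(x)-\sum_{p\le\sqrt x}\lambda_\pi^{(2)}(p)\log p+O(x^{1/3+\varepsilon})$, where $\psi_\pi(x)=\sum_{n\le x}\Lambda_\pi(n)$ is the summatory function of the von Mangoldt coefficients of $L(s,\pi)$, together with the prime-power asymptotic $\sum_{p\le y}\lambda_\pi^{(2)}(p)\log p=\nu(\pi)y+o(y)$ (again from the behaviour of $L_2$ at the edge) and the explicit formula for $L(s,\pi)$ under GRH, one gets
\[
\frac{\theta_\pi(x)}{\sqrt x}=-2m-\nu(\pi)-\Sigma(x)+o(1),\qquad\Sigma(x)\coloneqq\sum_{\gamma\ne 0}\frac{x^{i\gamma}}{\tfrac12+i\gamma},
\]
the $-2m$ arising from the zero of order $m$ at $s=\tfrac12$ (i.e.\ $\gamma=0$) and the (symmetrically interpreted) sum running over the remaining ordinates. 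Substituting into the Abel-summation identity for $S(x)$, the $\sqrt x$-part of $\theta_\pi$ integrates to $\tfrac12(-2m-\nu(\pi))\log\log x+O(1)$; adding the quadratic contribution, the $\log\log x$-terms collapse to exactly $-m$, and one is left with
\[
\log\Bigl((\log x)^{m}\prod_{p\le x}\prod_{j=1}^n\bigl(1-\alpha_{j,p}p^{-1/2}\bigr)^{-1}\Bigr)=-\frac{\Sigma(x)}{\log x}-\frac12\int_2^x\frac{\Sigma(t)}{t\log t}\,dt+C+o(1),
\]
with $C$ a constant assembled from $C_2$, the higher-power constant and the Euler--Mascheroni constant.

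What remains is to show the right-hand side converges, and here the two terms behave very differently. The integral is under control even under GRH alone: with $v=\log t$ the $\gamma$-th summand is $\int_{\log 2}^{\log x}e^{i\gamma v}v^{-1}\,dv$, which tends to $I(\gamma)\coloneqq\int_{\log 2}^{\infty}e^{i\gamma v}v^{-1}\,dv=O\bigl(\min(1,|\gamma|^{-1})\bigr)$; truncating the explicit formula at height $T=x$ makes the tail $\sum_{|\gamma|>x}$ contribute $O(x^{-1/2}\log x)$ to the integral, and the remaining finite sum converges, as $x\to\infty$, to $\sum_{\gamma\ne 0}I(\gamma)/(\tfrac12+i\gamma)$, absolutely by the zero-counting bound $\sum_\gamma(1+\gamma^2)^{-1}<\infty$. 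The genuine difficulty is the boundary term $\Sigma(x)/\log x$: since $\Sigma(x)=-\psi_\pi(x)/\sqrt x-2m+o(1)$, it tends to $0$ precisely when $\psi_\pi(x)=o(\sqrt x\log x)$, and this — not GRH itself — is the real content of the Deep Riemann Hypothesis. GRH alone only gives $\psi_\pi(x)\ll\sqrt x(\log x)^2$ (while $\psi_\pi(x)=\Omega_{\pm}(\sqrt x)$ holds unconditionally), so the honest limit does not follow from GRH; what GRH does yield, through the mean-square estimate $\int_2^X|\psi_\pi(x)|^2x^{-1}\,dx\ll X$ and Chebyshev's inequality, is $\psi_\pi(x)=o(\sqrt x\log x)$ for every $x$ outside a set of finite logarithmic measure, hence the asymptotic off such a set. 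Establishing the \emph{unrestricted} bound $\psi_\pi(x)=o(\sqrt x\log x)$, which I expect to require substantial new input on the vertical distribution — not just the location — of the zeros of $L(s,\pi)$, is the main obstacle to the full statement.

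Finally, once the limit in (A) is known to exist and be nonzero, its value is identified by comparison with the analytic side in the style of Goldfeld and Conrad: the renormalized partial product $\prod_{p\le x}\prod_j(1-\alpha_{j,p}p^{-s})^{-1}(1-\alpha_{j,p}^2p^{-2s})^{1/2}$ equals $L(s,\pi)L_2(2s,\pi)^{-1/2}$ for $\Re(s)>\tfrac12$ and extends continuously to $\Re(s)\ge\tfrac12$ under GRH, so feeding in the Laurent expansions $L(s,\pi)\sim\frac{L^{(m)}(1/2,\pi)}{m!}(s-\tfrac12)^m$ and $L_2(2s,\pi)\sim L_2^{\ast}(1,\pi)\,2^{R(\pi)}(s-\tfrac12)^{R(\pi)}$ near $s=\tfrac12$, together with the Mertens asymptotics above for the quadratic Euler factors and the $(\log x)^m$ normalization, pins down $C$. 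The factor $2^{R(\pi)}=2^{-\nu(\pi)}$ inside $L_2(2s,\pi)^{-1/2}$ is exactly the source of the $\sqrt 2^{\,\nu(\pi)}$; the copies of $e^{\gamma}$ produced by Mertens' theorem assemble into $e^{-m\gamma}$; and $m!$ enters through the Taylor coefficient — yielding the predicted value $\dfrac{\sqrt 2^{\,\nu(\pi)}}{e^{m\gamma}\,m!}\,L^{(m)}\bigl(\tfrac12,\pi\bigr)$, which is part (B).
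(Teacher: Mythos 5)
The statement you were asked about is Conjecture~\ref{DRH}, and the paper does not prove it --- it is an open conjecture, and your own analysis correctly identifies why: the full limit is equivalent to the unrestricted bound $\psi(x,\pi)=o(\sqrt{x}\log x)$, which does not follow from GRH (which only gives $O(\sqrt{x}(\log x)^2)$), and which encodes information about the vertical distribution of the zeros beyond their location. So there is no ``paper proof'' to match against; your proposal is rightly a conditional reduction plus an honest identification of the missing ingredient, and on that score it is sound.

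What your sketch does establish is, in substance, the paper's Theorem~\ref{Thm1} (= Theorem~\ref{boundedness}): the asymptotic of Conjecture~\ref{DRH} holds, under GRH and Ramanujan--Petersson, for all $x$ outside a set of finite logarithmic measure. Your route and the paper's are the same in essence but differ in execution. The paper derives an explicit formula for $\sum_{n\le x}\Lambda(n)a_\pi(n)n^{-s}$ (Proposition~\ref{explicit}), integrates it horizontally from $s$ to $\infty$ to get an exact expression for the partial Euler product at $\frac12<\Re(s)<1$ (Theorem~\ref{mainthm}), and then lets $s=\frac12+\frac1x$; the $\sqrt{2}^{\nu(\pi)}$ emerges from the second-moment term $U_s(x)$ via the Mertens-type Lemma~\ref{mertens} and Corollary~\ref{sqrt2}, and the $e^{-m\gamma}/m!$ from the expansion of $\Li(x^{1/2-s})$ against the Taylor coefficient $a_m=L^{(m)}(\frac12,\pi)/m!$. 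You instead work directly on the critical line, Abel-sum against $\theta_\pi(x)$, and identify the constant by Conrad's renormalized product $L(s,\pi)L_2(2s,\pi)^{-1/2}$; both identifications of the constant are legitimate and trace the $\sqrt2$ to the same source, namely the order of $L_2$ at $s=1$. For the exceptional set, the paper invokes Gallagher's method (Theorem~\ref{refinement}, via Qu), giving the stronger $\psi(x,\pi)\ll\sqrt{x}(\log\log x)^2$ off a set of finite logarithmic measure, whereas you propose a mean-square plus Chebyshev argument yielding $o(\sqrt{x}\log x)$ off such a set; either suffices to kill the boundary term $\Sigma(x)/\log x$. If you intend your argument as a proof of the restricted statement, you should make the mean-square estimate for $\psi_\pi$ precise (or simply cite the Gallagher--Qu refinement as the paper does); as a proof of Conjecture~\ref{DRH} itself, no argument is available, and your proposal correctly says so.
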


By truncating the Euler product at large $x$ for a wide range of $L$-functions, there is now numerical evidence for Conjecture \ref{DRH}  (see for instance \cite[page 281]{Conrad2005} or \cite[Section 3] {KimuraKoyamaKurokawa2014}). 
As another piece of evidence, we also mention that the function field analog of the conjecture has been proven by Kaneko--Koyama--Kurokawa (see \cite[Theorem 5.2]{KanekoKoyamaKurokawa2022}). 
To explain the relation between Conjecture \ref{DRH} and the Generalized Riemann Hypothesis (for the $L$-function $L(s, \pi)$),  let $
a_{\pi}(p^k)=\alpha_{1, p}^k+ \cdots +\alpha_{n, p}^k 
$ and let $$\psi(x, \pi)= \sum_{p^k \leq x} \log p \cdot  a_\pi(p^k).$$ 
Then the Generalized Riemann Hypothesis is equivalent to the fact that 
\begin{equation} \label{GRHEstimate}
\psi(x, \pi)= O( \sqrt x (\log x)^2)
\end{equation}
while Conjecture \ref{DRH} is equivalent to the estimate 
\begin{equation} \label{DRHestimate}
\psi(x, \pi)= o( \sqrt x \log x). 
\end{equation}
It is in this sense that Conjecture $\ref{DRH}$ seems deeper than the Generalized Riemann Hypothesis.  The estimate in Equation \eqref{DRHestimate} is indeed plausible; an analysis of Montgomery \cite{Mon70} about the vertical distribution of zeros of $L$-functions on the critical line suggests that the true order of magnitude of $\psi(x, \pi)$ is at most  $O( \sqrt x ( \log \log \log x)^2)$ which would imply \eqref{DRHestimate}.

However, error terms may not be the best way to determine to the precise relation between the Generalized Riemann Hypothesis and Conjecture \ref{DRH}; for instance, the Generalized Riemann Hypothesis is also equivalent to the slightly weaker error term $\psi(x, \pi)= O(x^{\frac{1}{2}+\epsilon})$ for any $\epsilon>0$.  Since Conjecture \ref{DRH} concerns the convergence of the Euler product at the centre of the critical strip, it is natural to ask whether the Generalized Riemann Hypothesis can be related to the Euler product at the centre as well.  Our first result answers this question in the affirmative, by showing that in fact the Generalized Riemann Hypothesis implies Conjecture \ref{DRH} outside a set of finite logarithmic measure. We first recall the following definition. 

\begin{definition}
Let $S \subseteq \mathbb R_{\geq 2}$ be a measurable subset of the real numbers. The logarithmic measure of $S$ is defined to be 
$$
\mu^{\times}(S)= \int_{S} \frac{dt}{t}. 
$$
\end{definition}

We remark that since we are working in this level of generality, we also need to assume the Ramanujan--Petersson conjecture for $\pi$ (we refer to Section \ref{Section2} for the description of this conjecture and for the cases it has been proven).

\begin{lettertheorem}[Theorem \ref{boundedness}] \label{Thm1}
Let $\pi$ be an irreducible cuspidal automorphic representation of $\GL_n(\mathbb A_\mathbb Q)$ such that $L(s, \pi)$ is entire and let $m = \ord \limits _{s = \frac{1}{2}} L(s, \pi)$. Assume the Ramanujan--Petersson Conjecture and the Riemann Hypothesis for $L(s, \pi)$. Then there exists a subset $S \subseteq \mathbb R_{\geq 2}$ of finite logarithmic measure such that for all $x \not \in S$, 
$$
(\log x)^m \cdot  \prod_{p \leq x}  \prod_{j=1}^n (1-\alpha_{j, p}p^{-\frac{1}{2} })^{-1}   \sim  \frac{\sqrt{2}^{ \nu (\pi) }}{e^{m \gamma} m!} \cdot L^{(m)} \left(\frac{1}{2}, \pi \right).
$$

\end{lettertheorem}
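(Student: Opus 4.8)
\emph{Proof plan.} The plan is to take logarithms and reduce the statement to the asymptotic behaviour of $\Psi_\pi(x):=\sum_{p^k\le x}\frac{a_\pi(p^k)}{k\,p^{k/2}}=\sum_{n\le x}\frac{\Lambda_\pi(n)}{\sqrt n\,\log n}$, where $\Lambda_\pi(p^k)=a_\pi(p^k)\log p$. Writing $\log P(x)=\sum_{p\le x}\sum_{k\ge 1}\frac{a_\pi(p^k)}{k\,p^{k/2}}$, the terms with $k\ge 3$ converge absolutely as $x\to\infty$ (one place where the Ramanujan--Petersson bound $|a_\pi(p^k)|\le n$ is needed), while the $k=2$ terms over $\sqrt x<p\le x$ --- present in $\log P(x)$ but not in $\Psi_\pi(x)$ --- contribute in the limit exactly $\frac{\nu(\pi)}{2}\log 2$. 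This last point rests on a Mertens-type asymptotic $\sum_{p\le y}\frac{a_\pi(p^2)}{p}=\nu(\pi)\log\log y+c+o(1)$, which follows from the analytic continuation and non-vanishing on $\Re(s)=1$ --- away from the zero or pole of order $R(\pi)=-\nu(\pi)$ at $s=1$ --- of $L_2(s,\pi)=L(s,\mathrm{sym}^2\pi)/L(s,\wedge^2\pi)$; this discrepancy of $\log 2$ in the relevant $\log\log$-scale, weighted by $\nu(\pi)$, is precisely Conrad's conceptual source of the factor $\sqrt 2^{\,\nu(\pi)}$. It therefore suffices to prove $\Psi_\pi(x)=-m\log\log x-m\gamma+\log\left(L^{(m)}(\tfrac12,\pi)/m!\right)+o(1)$ for $x$ outside a set of finite logarithmic measure.

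For this I would use the Mellin--Perron representation $\Psi_\pi(x)=\frac{1}{2\pi i}\int_{(c)}\log L(\tfrac12+w,\pi)\,\frac{x^w}{w}\,dw$ for $c>\tfrac12$ (the Dirichlet series of the coefficients of $\Psi_\pi$ being $\log L(\tfrac12+w,\pi)$), in truncated form with a controlled error, and then move the contour slightly past the line $\Re(w)=0$, with Hankel-type detours around the singularities lying on it. Under the Riemann Hypothesis the only singularities of $\log L(\tfrac12+w,\pi)$ crossed are its logarithmic branch points: one at $w=0$ from the central zero of order $m$, and one at each $w=i\gamma$ for $\gamma$ a nonzero ordinate of a critical zero. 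From the local expansion $\log L(\tfrac12+w,\pi)=m\log w+\log\left(L^{(m)}(\tfrac12,\pi)/m!\right)+O(w)$ near $w=0$, the Hankel integral of $\frac{m\log w}{w}\,x^w$ around the cut of $\log w$ produces $-m\log\log x-m\gamma$ (the constant $\gamma$ appearing through $\int_\epsilon^\infty t^{-1}x^{-t}\,dt=-\gamma-\log(\epsilon\log x)+o(1)$), while the simple pole carrying $\log\left(L^{(m)}(\tfrac12,\pi)/m!\right)$ contributes that constant as a residue. This is where all the main-term constants of the theorem come out, essentially for free from the Hadamard factorization of $L(s,\pi)$.

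Each branch point $w=i\gamma$ with $\gamma\ne 0$, where $\log L(\tfrac12+w,\pi)\sim m_\gamma\log(w-i\gamma)$, contributes a term of size $\asymp\frac{m_\gamma\,x^{i\gamma}}{i\gamma\,\log x}$; the factor $1/\log x$ here comes from $\int_0^\infty x^{-t}\,dt=1/\log x$, in contrast to the $w=0$ case, where the extra factor $1/w$ supplies instead a $1/t$ inside the integral and hence a $\log\log x$. Summing over $\gamma$, the total off-centre contribution is $\frac{1}{\log x}\Sigma(x)$ with $\Sigma(x)=\sum_{\gamma\ne 0}\frac{m_\gamma\,x^{i\gamma}}{i\gamma}$. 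Now $\Sigma(x)=O((\log x)^2)$ under the Riemann Hypothesis but is genuinely oscillatory, so $\Sigma(x)/\log x$ need not tend to $0$ pointwise; however a standard second-moment estimate --- the diagonal handled by $\sum_\rho|\gamma_\rho|^{-2}<\infty$, the off-diagonal by a routine bound for $\sum_{\gamma_1\ne\gamma_2}|\gamma_1\gamma_2|^{-1}\min(1,|\gamma_1-\gamma_2|^{-1})$ --- gives $\int_2^\infty\left|\Sigma(x)/\log x\right|^2\frac{dx}{x}<\infty$. Combined with the elementary lemma that any $f$ with $\int_2^\infty|f(x)|^2\frac{dx}{x}<\infty$ satisfies $f(x)\to 0$ as $x\to\infty$ outside a set of finite logarithmic measure (Chebyshev's inequality together with a diagonal choice of the exceptional set), this produces the set $S$ and completes the proof.

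The crux, and the main obstacle, is exactly this mismatch between what the Riemann Hypothesis supplies and what is needed: it gives only $\psi(x,\pi)=O(\sqrt x(\log x)^2)$, hence $\Sigma(x)/\log x=O(\log x)$, whereas one needs $\Sigma(x)/\log x=o(1)$; the resolution is that the Euler product at the centre carries an intrinsic $1/\log$-weight (unlike $\psi(x,\pi)$, which oscillates forever), so the off-centre zeros contribute something that is small only on average --- which is precisely why the conclusion holds off a set of finite logarithmic measure rather than for all $x$. The remaining work --- making the truncated Perron formula and the contour shift rigorous, which needs the usual conditional bounds on $\log L(\tfrac12+w,\pi)$ in vertical strips and some care with the archimedean factors and the functional equation when $\Re(w)<0$, together with the precise evaluation of the Hankel integral at $w=0$ --- is standard in spirit, if delicate in execution. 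An alternative to the contour argument is to begin from the explicit formula for $\psi(x,\pi)$ and apply two partial summations (first to $\sum_{p\le x}a_\pi(p)p^{-1/2}\log p$, then to $\sum_{p\le x}a_\pi(p)p^{-1/2}$), the same oscillatory term $\Sigma(x)/\log x$ and the same $L^2$ lemma reappearing; that route is more elementary but makes the identification of the constant more laborious, most cleanly done by an Abelian comparison with the behaviour of $\log L(\tfrac12+\delta,\pi)$ as $\delta\to 0^+$.
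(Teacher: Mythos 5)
The proposal is correct in outline and reaches the same conclusion by a genuinely different route.

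You work directly with $\log L$: the Mellin--Perron representation $\Psi_\pi(x)=\frac{1}{2\pi i}\int_{(c)}\log L(\tfrac12+w,\pi)\,\frac{x^w}{w}\,dw$, a contour shift to $\Re(w)=0$ with Hankel-type indentations around the logarithmic branch points, and a direct $L^2$ estimate (diagonal via $\sum_\rho|\rho|^{-2}<\infty$, off-diagonal via the density of zeros) for the off-centre sum $\Sigma(x)/\log x$, followed by the Chebyshev-plus-diagonalization lemma that converts an $L^2(dx/x)$ bound into pointwise decay off a set of finite logarithmic measure. The paper instead applies Perron to $-L'/L$ (which has only simple poles, so no branch-cut bookkeeping), obtains the explicit formula of Proposition~\ref{explicit}, and then integrates the resulting identity along a horizontal ray from $s$ to $+\infty$ to get Theorem~\ref{mainthm}; the zero contribution $R_s(x)$ is then reduced to $\psi(x,\pi)/\sqrt x$ and handled by invoking the Gallagher--Qu refinement (Theorem~\ref{refinement}), which is the same $L^2$-in-short-intervals mechanism you use, just packaged as a black box at the level of $\psi(x,\pi)$ rather than proved directly for $\Sigma(x)$. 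Both methods hinge on the same crux, which you identify correctly: the Riemann Hypothesis alone gives $\Sigma(x)/\log x = O(\log x)$, and one needs $o(1)$, so the gain comes from averaging and one loses a set of finite logarithmic measure. Your route is closer to Conrad's treatment and has the virtue of producing the constant $-m\gamma - m\log\log x$ cleanly from the Hankel computation at $w=0$ (the paper instead evaluates $\Li(x^{1/2-s})$ at $s=\tfrac12+\tfrac1x$ using the classical series expansion of $\Li$); the paper's route avoids the delicacy you flag of indenting a contour around infinitely many branch points on the critical line, which is the part of your plan that would require the most care to make rigorous. The treatment of the $k=2$ ``second-moment'' terms --- the Mertens-type asymptotic $\sum_{p\le y}a_\pi(p^2)/p = -R(\pi)\log\log y + O(1)$ and the consequent limit $\nu(\pi)\log\sqrt 2$ --- is identical to the paper's Lemma~\ref{mertens} and Corollary~\ref{sqrt2}.
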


The method of proof first consists of developing a suitable version of an explicit formula for $L(s, \pi)$ (Proposition \ref{explicit}) that allows us to establish the asymptotic behaviour of partial Euler products of $L(s, \pi)$ in the right-half of the critical strip (Theorem \ref{mainthm}) .  One of the terms in this asymptotic formula involves contributions coming from the zeros of $L(s, \pi)$; this term is the most delicate to handle, and the standard bound in Equation \eqref{GRHEstimate} coming from the Generalized Riemann Hypothesis will not suffice.  Rather, we need the  refined estimate $
\psi(x, \pi)= O( \sqrt x (\log \log x)^2)
$ which holds, conditional on the Generalized Riemann Hypothesis, outside a set of finite logarithmic measure (Theorem \ref{refinement}). 
The method described here builds upon previous work of the author \cite{Sheth2023}, where these techniques were used to study the relations between the original and modern formulations of the Birch and Swinnerton--Dyer conjecture. In this paper, we further explore applications of these ideas to questions concerning Chebyshev's bias.

\begin{comment}
Using the method of Goldfeld \cite{Goldfeld1982}, we are also able to prove the converse.  

\begin{theorem}
Let $\pi$ be an irreducible cuspidal automorphic representation of $\GL_n(\mathbb A_\mathbb Q)$ with  $m = \ord_{s = 1/2} L(s, \pi)$. 
Assume the Ramanujan conjecture for $L(s, \pi)$. 
Suppose that 
$$
(\log x)^m \prod_{p \leq x}  \prod_{j=1}^n (1-\alpha_{j, p}p^{-\frac{1}{2} })^{-1}   \asymp 1.  
$$
Then the Riemann Hypothesis for $L(s, \pi)$ holds. 

\end{theorem}

Thus, our results give a conceptual explanation of the relationship between Conjecture \ref{DRH} and the Riemann Hypothesis: while Conjecture \ref{DRH} predicts that Euler products should converge at the center, the slightly weaker Generalized Riemann Hypothesis is essentially equivalent to the fact that Euler products are bounded at the centre. 

\begin{comment}
We summarize this relationship in the following table. 
\vspace{5mm}
\\
\begin{center}
\begin{tabularx}{0.8\textwidth} { 
  | >{\raggedright\arraybackslash}X 
  | >{\centering\arraybackslash}X 
  | >{\raggedleft\arraybackslash}X | }
 \hline
 Conjecture \ref{DRH} (``Deep Riemann Hypothesis")  & Convergence of Euler products at the centre  \\
 \hline
 Generalized Riemann Hypothesis  & Boundedness of Euler products at the centre  \\
\hline
\end{tabularx}
\end{center}
\vspace{2mm}
\\
\end{comment}

\subsection{Applications to Chebyshev's bias}
Chebyshev's bias originally referred to the phenomenon that, even though the primes are equidistributed in the multiplicative residue classes mod 4, there seem to more primes congruent to 3 mod 4 than 1 mod 4.  Let $\pi(x; q, a)$ denote the number of primes up to $x$ which are congruent to $a$ modulo $q$ and let $S=\{x \in \mathbb R_{\geq 2}: \pi(x; 4, 3)-\pi(x; 4, 1)>0\}$.  Building on Chebyshev's observations, Knapowski--Tur\'{a}n~\cite{KnapowskiTuran1962} conjectured that the proportion of postive real numbers which lie in the set $S$ would equal 1 as $x \to \infty$.  However, this conjecture was later disproven by Kaczorowski~\cite{Kaczorowski1995}  conditionally on the Generalized Riemann Hypothesis, by showing that the limit does not exist. Rubinstein and Sarnak \cite{RubinsteinSarnak1994} instead considered the logarithmic density $\delta(S):= \frac{1}{\log X} \cdot \lim \limits _{X \to \infty}  \int_{t \in S \cap [2, X]} \frac{dt}{t}$ of $S$; assuming the Generalized Riemann Hypothesis and the assumption that the non-negative imaginary parts of zeros of Dirichlet $L$-functions are linearly independent over $\mathbb Q$, they showed that this limit exists and $\delta(S)=0.9959 \ldots$, hence giving a satisfactory explanation of this phenomenon. Similar biases have since been observed in various other situations as well. For instance, if $E/\mathbb Q$ is an elliptic curve and $a_p$ denotes the trace of the Frobenius at the prime $p$, then even though $a_p$ is positive and negative equally often by the Sato--Tate conjecture, Mazur \cite{Maz08} noted 
$
D_E(x) = \# \{ p \leq x: a_p>0\}-  \# \{ p \leq x: a_p <0\}
$
has a bias towards being negative if the rank of $E$ is large.  An explanation of this fact was subsquently given by Sarnak \cite{Sar07} in the spirit of \cite{RubinsteinSarnak1994}.  A conceptual framework for dealing with problems related to Chebyshev's bias, generalizing the Rubinstein--Sarnak approach to a wide range of $L$-functions, was recently given by Devin \cite{Devin2020}.

In \cite{AokiKoyama2023},  Aoki-Koyama present an alternative approach to describe Chebyshev's bias that is closely related to the behaviour of Euler products at the center of the critical strip. 

\begin{definition}[Aoki-Koyama \cite{AokiKoyama2023}] \label{Chebyshev}
Let $(c_p)_p \subseteq \R$ be a sequence over primes $p$  such that
$$
\lim_{x \to \infty} \frac{\#\{p \mid c_p > 0,  p \leq x \}}{\#\{p \mid c_p < 0,  p \leq x \}} = 1.
$$
We say that $(c_p)_p$ has a \textit{Chebyshev bias towards being positive} if there exists a positive constant $C$ such that 
$\displaystyle{
\sum_{p \leq x} \frac{c_p}{\sqrt p} \sim C \log \log x}$.  On the other hand, we say that $c_p$ is \textit{unbiased} if
$
\displaystyle{\sum_{p \leq x} \frac{c_p}{\sqrt p } = O(1)}. 
$
\end{definition}
For instance, if we let $\chi_4$ to be the non-trivial Dirichlet character modulo $4$, $c_p=\chi_4(p)$, then the sum in Definition \ref{Chebyshev} becomes 
$
\sum_{p \leq x} \frac{ \chi(p) } { \sqrt p} = \pi_{\frac{1}{2}}(x; 4, 1)-\pi_{\frac{1}{2}}(x; 4, 3), 
$
where 
\newline 
$\displaystyle{\pi_{s}(x; q, a) = \sum \limits _{\substack{p < x \colon \text{prime} \\ p \equiv a \tpmod q}}  \frac{1}{p^{s} } }$ for $s \geq 0$ is the weighted prime counting function. 

%In general, partial Euler products of $L$-functions at the central point are closely related (after taking logarithms) to the sum appearing in Definition \ref{Chebyshev}. 
Using Theorem \ref{Thm1}, we obtain the following asymptotic for the types of sums appearing in Definition \ref{Chebyshev}. 

\begin{lettertheorem}[Theorem \ref{Satake}] \label{satakebias}
Let $\pi$ be an irreducible cuspidal automorphic representation of $\GL_n(\mathbb A_\mathbb Q)$ such that $L(s, \pi)$ is entire and let $m= \ord_{s=1/2} L(s, \pi)$. Assume the Ramanujan--Petersson Conjecture and the Riemann Hypothesis for $L(s, \pi)$. Then there exists a constant $c_\pi$ such that 
$$
\sum_{p \leq x} \frac{\alpha_{1, p}+ \cdots +\alpha_{n, p}}{\sqrt p}= \left( \frac{R(\pi)}{2}-m \right) \log \log x+ c_\pi+ o(1)
$$
for all $x$ outside a set of finite logarithmic measure, where $R(\pi) =\ord \limits _{s=1} L_2(s, \pi)$. 
\end{lettertheorem}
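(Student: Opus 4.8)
The plan is to take logarithms in Theorem~\ref{Thm1} and isolate the contribution of the first power of the primes, after bounding the contributions of the higher prime powers. Write $a_\pi(p^k) = \alpha_{1,p}^k + \cdots + \alpha_{n,p}^k$ and put $B := \frac{\sqrt{2}^{\nu(\pi)}}{e^{m\gamma}\,m!}\,L^{(m)}\!\left(\frac{1}{2},\pi\right)$, which is nonzero because $m$ is exactly the order of vanishing of $L(s,\pi)$ at $s=\frac{1}{2}$. By Theorem~\ref{Thm1} there is a set $S \subseteq \mathbb{R}_{\ge 2}$ of finite logarithmic measure with
$$
(\log x)^m \prod_{p\le x}\prod_{j=1}^n\bigl(1-\alpha_{j,p}p^{-1/2}\bigr)^{-1} \longrightarrow B \qquad (x\to\infty,\ x\notin S).
$$
Using $-\log\bigl(1-\alpha_{j,p}p^{-1/2}\bigr)=\sum_{k\ge1}\alpha_{j,p}^k/(k\,p^{k/2})$ and choosing, for large $x\notin S$, the branch of the logarithm along which the left-hand side stays close to a fixed value of $\log B$, this is equivalent to
$$
m\log\log x + \sum_{p\le x}\frac{a_\pi(p)}{\sqrt p} + \frac12\sum_{p\le x}\frac{a_\pi(p^2)}{p} + \sum_{p\le x}\sum_{k\ge3}\frac{a_\pi(p^k)}{k\,p^{k/2}} = \log B + o(1), \qquad x\notin S.
$$

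Next I would dispose of the last two sums. By the Ramanujan--Petersson conjecture $|\alpha_{j,p}|\le1$, so $\sum_{k\ge3}a_\pi(p^k)/(k\,p^{k/2}) = O(p^{-3/2})$ and the triple sum converges as $x\to\infty$ to a constant $C_3$, with remainder $o(1)$. For the $k=2$ sum I would use the analytic properties of the second moment $L$-function $L_2(s,\pi) = L(s,\mathrm{sym}^2\pi)/L(s,\wedge^2\pi)$, recalling that $\ord_{s=1}L_2(s,\pi) = R(\pi)$. The logarithmic expansion of $L_2$ gives $\log L_2(s,\pi) = \sum_p a_\pi(p^2)p^{-s} + h(s)$ with $h$ holomorphic for $\Re(s)>\frac12$, so the Dirichlet series $\sum_p a_\pi(p^2)p^{-s}$ inherits the singularity $R(\pi)\log(s-1)$ of $\log L_2(s,\pi)$ at $s=1$. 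Combined with the meromorphic continuation past $\Re(s)=1$ and the standard zero-free region for $L(s,\mathrm{sym}^2\pi)$ and $L(s,\wedge^2\pi)$ — available in the cases underlying the applications of Section~\ref{Section4} — a Mertens-type theorem (equivalently, the classical fact that the partial Euler product $\prod_{p\le x}\prod_j(1-\alpha_{j,p}^2p^{-1})^{-1}$ of $L_2$ at the edge of its critical strip is asymptotic to a constant multiple of $(\log x)^{-R(\pi)}$) yields
$$
\sum_{p\le x}\frac{a_\pi(p^2)}{p} = -R(\pi)\log\log x + C_2 + o(1)
$$
for all $x\ge2$ and some constant $C_2$. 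Substituting both estimates into the previous identity and solving for $\sum_{p\le x}a_\pi(p)/\sqrt p$ gives, for $x\notin S$,
$$
\sum_{p\le x}\frac{\alpha_{1,p}+\cdots+\alpha_{n,p}}{\sqrt p} = \Bigl(\frac{R(\pi)}{2}-m\Bigr)\log\log x + \Bigl(\log B - \frac12 C_2 - C_3\Bigr) + o(1),
$$
which is the claimed formula with $c_\pi = \log B - \frac12 C_2 - C_3$.

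The main obstacle is the $k=2$ term: to produce not merely the leading term $-R(\pi)\log\log x$ but a genuine secondary constant $C_2$ one needs the meromorphic continuation of $L(s,\mathrm{sym}^2\pi)$ and $L(s,\wedge^2\pi)$ slightly past $\Re(s)=1$ together with a zero-free region, so that the error term in the associated prime number theorem is small enough to survive being summed by parts twice; this is precisely the input that upgrades an $O(1)$ bound to an honest limit. Note that no information about the zeros of $L(s,\pi)$ itself beyond what is already packaged in Theorem~\ref{Thm1} enters here. The remaining points — the branch-of-logarithm bookkeeping needed to pass between the multiplicative asymptotic of Theorem~\ref{Thm1} and the additive identity above, the absolute convergence of the $k\ge3$ tail via Ramanujan--Petersson, and the final rearrangement — are routine.
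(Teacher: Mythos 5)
Your proposal is correct and follows essentially the same route as the paper: take logarithms of Theorem~\ref{Thm1}, split the logarithm of the partial Euler product into the $k=1$, $k=2$, and $k\ge 3$ prime-power contributions, use Ramanujan--Petersson to show the $k\ge 3$ tail converges to a constant, and apply a Mertens-type estimate $\sum_{p\le x} a_\pi(p^2)/p = -R(\pi)\log\log x + M + o(1)$ to the $k=2$ sum before rearranging. The paper packages this last input as Lemma~\ref{mertens}, citing Conrad's work directly for the Mertens asymptotic (where the needed analytic properties of $L_2(s,\pi)$, including meromorphic continuation past $\Re(s)=1$, are already established via \cite[Example 1]{Devin2020}), whereas you re-derive the required facts from scratch and are somewhat more cautious than necessary about zero-free regions; the substance is the same.
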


As applications of this theorem, we obtain results towards Chebyshev's bias, in the sense of Definition \ref{Chebyshev}, for a wide class of equidistributed sequences in number theory. 

\begin{lettercorollary}[Corollary \ref{primenumberrace}] \label{corA}
Let $\chi_4$ denote the non-trivial Dirichlet character modulo 4.  Assume the Riemann Hypothesis for $L(\chi_4, s)$. 
Then there exists a constant $c$ such that 
$$
\sum_{p \leq x} \frac{\chi_4(p)}{\sqrt p}= - \frac{1}{2} \log \log x+ c+ o(1)
$$
for all $x$ outside a set of finite logarithmic measure.  In particular, in the sense of Definition \ref{Chebyshev}, there is a Chebyshev bias towards primes which are 3 mod 4. 
\end{lettercorollary}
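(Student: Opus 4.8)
The plan is to obtain Corollary \ref{corA} as a direct specialization of Theorem \ref{satakebias} to the automorphic representation $\pi$ of $\GL_1(\mathbb A_\mathbb Q)$ attached to the primitive Dirichlet character $\chi_4$. First I would verify the hypotheses of Theorem \ref{satakebias}. Since $\chi_4$ is non-trivial, $L(s,\chi_4)$ is entire; here $n=1$ and the unique Satake parameter at each prime is $\alpha_{1,p}=\chi_4(p)$, so $|\alpha_{1,p}|\le 1$ and the Ramanujan--Petersson Conjecture holds trivially; and the Riemann Hypothesis for $L(s,\chi_4)$ is assumed. Theorem \ref{satakebias} then provides a constant $c$ with
\[
\sum_{p\le x}\frac{\chi_4(p)}{\sqrt p}=\Big(\tfrac{R(\chi_4)}{2}-m\Big)\log\log x+c+o(1)
\]
for all $x$ outside a set of finite logarithmic measure, where $m=\ord_{s=1/2}L(s,\chi_4)$ and $R(\chi_4)=\ord_{s=1}L_2(s,\chi_4)$.

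It then remains to identify the two invariants $R(\chi_4)$ and $m$. For the first, Example \ref{Dirichlet} gives $L_2(s,\chi_4)=L(s,\chi_4^2)$, and since $\chi_4$ is quadratic $\chi_4^2$ is trivial, so $L_2(s,\chi_4)$ agrees with $\zeta(s)$ up to the Euler factor at $p=2$; hence it has a simple pole at $s=1$ and $R(\chi_4)=-1$. For the second, I would invoke the non-vanishing $L(1/2,\chi_4)\neq 0$: for $s>0$ one has the convergent representation $L(s,\chi_4)=1-3^{-s}+5^{-s}-7^{-s}+\cdots$, an alternating series with strictly decreasing positive terms, whence $L(1/2,\chi_4)>1-3^{-1/2}>0$ and $m=0$. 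Substituting $R(\chi_4)=-1$ and $m=0$ turns the displayed formula into $\sum_{p\le x}\chi_4(p)/\sqrt p=-\tfrac12\log\log x+c+o(1)$ outside a set of finite logarithmic measure, which is the asymptotic claimed.

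For the ``in particular'' assertion I would match this with Definition \ref{Chebyshev} applied to the sequence $c_p=-\chi_4(p)$. The equidistribution hypothesis $\#\{p\le x: c_p>0\}/\#\{p\le x: c_p<0\}\to 1$ is precisely the prime number theorem for the residue classes $1$ and $3$ modulo $4$, and the asymptotic just proved reads $\sum_{p\le x}(-\chi_4(p))/\sqrt p\sim\tfrac12\log\log x$, i.e.\ a Chebyshev bias towards being positive with constant $C=\tfrac12$; since $-\chi_4(p)>0$ exactly when $p\equiv 3\pmod 4$, this is a bias towards primes which are $3$ mod $4$, consistent with the reformulation $\sum_{p\le x}\chi_4(p)/\sqrt p=\pi_{1/2}(x;4,1)-\pi_{1/2}(x;4,3)$ recorded earlier.

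I do not anticipate a genuine obstacle: granting Theorem \ref{satakebias}, the corollary reduces to computing the arithmetic invariants of $\chi_4$ together with a sign count. The only two points deserving a word of justification are the non-vanishing $L(1/2,\chi_4)\neq 0$, which the alternating-series argument above settles elementarily, and the fact that Theorem \ref{satakebias} yields the asymptotic only off a set of finite logarithmic measure — so the conclusion, and therefore the instance of Chebyshev bias being verified, is the one valid for $x$ outside such a set, exactly as the statement of the corollary records.
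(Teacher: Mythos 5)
Your proposal matches the paper's proof exactly: both specialize Theorem \ref{Satake} to $\chi_4$, invoke Example \ref{Dirichlet} to get $R(\chi_4)=-1$, and use $L(1/2,\chi_4)\neq 0$ to conclude $m=0$. The only difference is cosmetic — the paper simply calls the non-vanishing ``well known,'' while you supply the short alternating-series argument, which is a harmless and correct addition.
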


\begin{lettercorollary} [Corollary \ref{tauasymptotic}] \label{corB}
Let $\tau(n)$ denote Ramanujan's tau function. Assume the Riemann Hypothesis for $L(s, \Delta)$. Then there exists a constant $c$ such that 
$$
\sum_{p \leq x} \frac{\tau(p)}{p^6} = \frac{1}{2} \log \log x + c+o(1)
$$
for all $x$ outside a set of finite logarithmic measure. In particular, in the sense of Definition \ref{Chebyshev}, the sequence $\tau(p) p^{-\frac{11}{2}}$ has a Chebyshev bias towards being positive.
\end{lettercorollary}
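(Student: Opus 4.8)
The plan is to apply Theorem \ref{Satake} to the cuspidal automorphic representation $\pi$ of $\GL_2(\mathbb A_\mathbb Q)$ attached to the unitarily normalized form $\Delta$, and then read off the statement by matching normalizations. Recall $L(s, \Delta) = \sum_{n \geq 1} \tau(n) n^{-s} = \prod_p \left(1 - \tau(p) p^{-s} + p^{11 - 2s}\right)^{-1}$; writing the local factor as $(1 - \alpha_p p^{-s})^{-1}(1 - \beta_p p^{-s})^{-1}$ with $\alpha_p + \beta_p = \tau(p)$ and $\alpha_p \beta_p = p^{11}$, the automorphic $L$-function in the sense of Section \ref{Section2} is $L(s, \pi) := L\!\left(s + \tfrac{11}{2}, \Delta\right)$, with Satake parameters $\alpha_{1, p} = \alpha_p p^{-11/2}$ and $\alpha_{2, p} = \beta_p p^{-11/2}$. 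Hence $\alpha_{1, p} + \alpha_{2, p} = \tau(p) p^{-11/2}$, and the left-hand side of Theorem \ref{Satake} becomes exactly $\sum_{p \leq x} \tau(p) p^{-11/2} / \sqrt p = \sum_{p \leq x} \tau(p)/p^{6}$.

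It then remains to verify the hypotheses and to compute the invariants $m$ and $R(\pi)$. Since $\Delta$ is a cusp form, $L(s, \pi)$ is entire; the Ramanujan--Petersson conjecture for $\pi$ is Deligne's bound $|\tau(p)| \leq 2 p^{11/2}$; and the Riemann Hypothesis for $L(s, \pi)$ is the assumed Riemann Hypothesis for $L(s, \Delta)$ after recentering. For $m = \ord_{s = 1/2} L(s, \pi) = \ord_{s = 6} L(s, \Delta)$, I would use that the completed $L$-function $(2\pi)^{-s} \Gamma(s) L(s, \Delta)$ has a functional equation relating $s$ and $12 - s$ with root number $(-1)^{12/2} = +1$, so the order of vanishing at the center is even; together with the known non-vanishing $L(6, \Delta) \neq 0$ this gives $m = 0$. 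For $R(\pi) = \ord_{s = 1} L_2(s, \pi)$, I would invoke $\nu(\pi) = m(\mathrm{sym}^2 \pi) - m(\wedge^2 \pi) = -R(\pi)$ from Section \ref{Section2}: as $\Delta$ has level one it is non-dihedral, so by the Gelbart--Jacquet lift $\mathrm{sym}^2 \pi$ is cuspidal on $\GL_3(\mathbb A_\mathbb Q)$ and $m(\mathrm{sym}^2 \pi) = 0$, while $\wedge^2 \pi$ is the trivial central character of $\pi$, so $m(\wedge^2 \pi) = 1$; thus $\nu(\pi) = -1$ and $R(\pi) = 1$. (Equivalently, $L_2(s, \pi) = L(s, \mathrm{sym}^2 \pi)/\zeta(s)$ has a simple zero at $s = 1$, since $L(s, \mathrm{sym}^2 \pi)$ is entire and nonzero there.)

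Plugging $m = 0$ and $R(\pi) = 1$ into Theorem \ref{Satake} gives, for all $x$ outside a set of finite logarithmic measure,
\[
\sum_{p \leq x} \frac{\tau(p)}{p^{6}} = \left(\frac{R(\pi)}{2} - m\right) \log \log x + c_\pi + o(1) = \frac{1}{2} \log \log x + c + o(1),
\]
which is the displayed asymptotic. For the final assertion, set $c_p := \tau(p) p^{-11/2}$; the equidistribution hypothesis of Definition \ref{Chebyshev} holds because by the Sato--Tate theorem for $\Delta$ one has $c_p = 2\cos\theta_p$ with $\theta_p$ equidistributed for the Sato--Tate measure, which is symmetric about $\pi/2$, and the set $\{p : \tau(p) = 0\}$ has density zero; hence $\#\{p \leq x : c_p > 0\}/\#\{p \leq x : c_p < 0\} \to 1$. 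Since the constant $\tfrac{1}{2}$ is positive, $(c_p)_p$ has a Chebyshev bias towards being positive in the sense of Definition \ref{Chebyshev}.

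The argument is bookkeeping on top of Theorem \ref{Satake}; the genuine inputs are classical, namely Deligne's bound, the automorphy and edge non-vanishing of $\mathrm{sym}^2 \Delta$ (fixing $R(\pi) = 1$), the root number together with non-vanishing at the center (giving $m = 0$), and the Sato--Tate theorem. Accordingly, the main point to be careful about is the alignment of normalizations: one must check that the $p^{-6}$ in the statement is precisely $p^{-11/2}$ (from unitarily normalizing the Satake parameters) times $p^{-1/2}$ (the weight in Definition \ref{Chebyshev} and Theorem \ref{Satake}), and that the sign hypothesis of Definition \ref{Chebyshev} is genuinely met — for which the density-zero statement about the vanishing of $\tau(p)$ suffices and Lehmer's conjecture is not needed.
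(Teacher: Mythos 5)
Your proposal is correct and follows essentially the same route as the paper: apply Theorem \ref{Satake} to the unitarily normalized $L(s,\pi_\Delta) = L(s+\tfrac{11}{2},\Delta)$, using $m=0$ and $R(\pi_\Delta)=1$ so that the coefficient of $\log\log x$ is $\tfrac{1}{2}$. The only difference is cosmetic: the paper cites \cite{KK22} for $m=0$ and $R(\pi_\Delta)=1$, whereas you re-derive them (root number plus central non-vanishing for $m$; Gelbart--Jacquet cuspidality of $\mathrm{sym}^2\pi$ together with $\wedge^2\pi$ being the trivial central character for $R(\pi)$), and you additionally spell out the Sato--Tate equidistribution hypothesis of Definition \ref{Chebyshev}, which the paper leaves implicit.
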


\begin{lettercorollary} [Corollary \ref{frobtrace}] \label{corC}
Let $E/\mathbb Q$ be an elliptic curve with rank $\rk(E)$ and Frobenius trace $a_p$ for each prime $p$. Assume the Birch and Swinnerton-Dyer conjecture and the Riemann Hypothesis for $L(E, s)$. 
Then there exists a constant $c$ depending on $E$ such that 
$$
\sum_{p \leq x} \frac{a_p}{p}= \left(\frac{1}{2}-\rk(E) \right) \log \log x+ c+o(1)
$$
for all $x$ outside a set of finite logarithmic measure.  In particular, $a_p$ has a bias towards being positive if $\rk(E)=0$ and a bias towards being negative if $\rk(E)>0$. 
\end{lettercorollary}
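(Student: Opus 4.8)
The plan is to derive Corollary~\ref{corC} as a special case of Theorem~\ref{satakebias}, applied to the cuspidal automorphic representation $\pi_E$ of $\GL_2(\mathbb A_\mathbb Q)$ attached to $E$ by the modularity theorem of Wiles and Breuil--Conrad--Diamond--Taylor, normalized so that $L(s,\pi_E)=L(E,s+\tfrac12)$ in the classical normalization (so that the centre of the critical strip is at $s=\tfrac12$). First I would check that the hypotheses of Theorem~\ref{satakebias} hold: $L(s,\pi_E)$ is entire by modularity; the Ramanujan--Petersson conjecture for $\pi_E$ is the Hasse bound $|a_p|\le 2\sqrt p$, hence a theorem; and the Riemann Hypothesis for $L(s,\pi_E)$ is precisely the Riemann Hypothesis for $L(E,s)$, which has been assumed.

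Next I would match the two sides of the asymptotic. For $p\nmid N_E$ the Satake parameters of $\pi_{E,p}$ are $\alpha_{1,p}=\alpha_p p^{-1/2}$ and $\alpha_{2,p}=\beta_p p^{-1/2}$, where $\alpha_p+\beta_p=a_p$ and $\alpha_p\beta_p=p$, so $\alpha_{1,p}+\alpha_{2,p}=a_p/\sqrt p$; the finitely many primes dividing $N_E$ contribute a bounded amount which can be absorbed into the constant. Thus $\sum_{p\le x}(\alpha_{1,p}+\alpha_{2,p})/\sqrt p=\sum_{p\le x}a_p/p$, the sum in the corollary. For the right-hand side I need $m=\ord_{s=1/2}L(s,\pi_E)$ and $R(\pi_E)=\ord_{s=1}L_2(s,\pi_E)$: by the Birch and Swinnerton-Dyer conjecture (only the equality of analytic and algebraic rank is used) we have $m=\ord_{s=1}L(E,s)=\rk(E)$, while $L_2(s,\pi_E)$ is the ratio of $L(s,\mathrm{sym}^2\pi_E)$ and $L(s,\wedge^2\pi_E)$. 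In the chosen normalization $\wedge^2\pi_E$ is the trivial Hecke character, so $L(s,\wedge^2\pi_E)=\zeta(s)$ has a simple pole at $s=1$, whereas $L(s,\mathrm{sym}^2\pi_E)$ is holomorphic and non-vanishing at $s=1$ (equivalently $m(\mathrm{sym}^2\pi_E)=0$ and $m(\wedge^2\pi_E)=1$, so $\nu(\pi_E)=-1$), since $\mathrm{sym}^2\pi_E$ is cuspidal or an isobaric sum of Hecke characters according as $E$ has no CM or has CM. Hence $R(\pi_E)=0-(-1)=1$, and Theorem~\ref{satakebias} gives $\sum_{p\le x}a_p/p=(\tfrac12-\rk(E))\log\log x+c+o(1)$ off a set of finite logarithmic measure, with $c=c_{\pi_E}$ depending only on $E$.

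For the final assertion I would invoke the Sato--Tate theorem for $E/\mathbb Q$: since $a_p/(2\sqrt p)$ equidistributes with respect to a measure symmetric about $0$, one has $\#\{p\le x:a_p>0\}\sim\#\{p\le x:a_p<0\}$, so the equidistribution hypothesis of Definition~\ref{Chebyshev} is satisfied by $(a_p)_p$; the leading coefficient $\tfrac12-\rk(E)$ is positive exactly when $\rk(E)=0$ and negative when $\rk(E)\ge 1$, which is the claimed bias. The only step here that is not routine bookkeeping is the evaluation of $R(\pi_E)$, which rests on the holomorphy and non-vanishing of the symmetric square $L$-function of $E$ at $s=1$ and on the pole of $\zeta(s)$ coming from the exterior square; the other point requiring care is keeping the analytic normalization consistent so that the shift $L(s,\pi_E)=L(E,s+\tfrac12)$ correctly matches $\ord_{s=1/2}$ with $\ord_{s=1}$ and $a_p/\sqrt p$ with the Satake parameter sum.
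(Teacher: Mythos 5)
Your proof is correct and takes essentially the same route as the paper: both apply Theorem~\ref{Satake} to the automorphic representation $\pi_E$ attached to $E$ by modularity, identify $m=\rk(E)$ via BSD, and obtain $R(\pi_E)=1$, yielding the coefficient $\tfrac12-\rk(E)$. The only cosmetic difference is that you derive $R(\pi_E)=1$ from first principles via $L_2=L(s,\mathrm{sym}^2\pi_E)/\zeta(s)$ and the non-vanishing of the symmetric square at $s=1$, whereas the paper simply cites Conrad's Example~4.7 for the simple zero of the second-moment $L$-function; you also explicitly invoke Sato--Tate to verify the equidistribution hypothesis of Definition~\ref{Chebyshev}, a step the paper leaves implicit.
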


Corollary \ref{corC} is a concrete instance of Birch and Swinnerton-Dyer's initial observations, which motivated them to formulate their celebrated conjecture, that elliptic curves which have more rational points also tend to have more than the expected number of points modulo primes $p$. 

Corollaries \ref{corA}, \ref{corB} and \ref{corC} have been inspired by similar statements in \cite{AokiKoyama2023}, \cite{KK22} and \cite{KanekoKoyama2023} respectively. The key difference is that the statements in \textit{op.cit.} assume Conjecture \ref{DRH}, but our statements only assume the Generalized Riemann Hypothesis, with the caveat that our asymptotics hold for all $x$ outside an exceptional set. Indeed, as far as we are aware, this is the first instance in the literature where an explanation of the above mentioned phenomenona can be given only assuming the Generalized Riemann Hypothesis; as mentioned above, previous work on the subject, such as the Rubinstein--Sarnak approach, also assume deep conjectures about the linear independence of zeros of the relevant $L$-functions. Finally, we refer the reader to \cite{AokiKoyama2023} for more examples of this flavour such as Chebyshev's bias in the splitting of prime ideals in Galois extension of number fields; we also note that there is related work by Okumura \cite{Okumura2023}, studying Chebyshev's bias for Fermat curves of primes degree. Using Theorem \ref{satakebias}, analogs for all these results can be proven outside a finite set of logarithmic measure assuming only the Generalized Riemann Hypothesis.

\begin{comment}
\begin{remark}
Slightly sharper results (replacing the $O(1)$ by $M+o(1)$ where $M$ is a constant) have been obtained by Aoki-Koyama under Conjecture \ref{DRH}; our examples are the analogous results under the assumption of the Generalized Riemann Hypothesis (as opposed to assuming Conjecture \ref{DRH}). 
\end{remark}
\end{comment}

%\subsection*{Notational conventions} We write $f=O(g)$ or $f \ll g$ if there exists a positive constant $c$ such that $|f(z)| \leq c |g(z)|$ for all $z$ in a specified range. The constant is allowed to depend on the automorphic representation.
%We write $f \asymp g$ if $f \ll g$ and $g \ll f$.  %We assume that $\pi$ is not the trivial representation and that $L(s, \pi)$ is entire. 

\subsection*{Acknowledgements}
I would like to thank Shin-ya Koyama for asking the questions that led to this paper and for helpful suggestions. I would also like to thank Adam Harper and Nuno Arala Santos for helpful discussions. 

\newpage
\section{Preliminary background} \label{Section2}

In this section, we review the properties of automorphic $L$-functions that we will need in the rest of the paper. 
Let $\pi=\bigotimes'\pi_v$ be an irreducible cuspidal automorphic representation of $\GL_n(\mathbb A_\mathbb Q)$.  Outside a finite set of places, for each finite place $p$, $\pi_p$ is unramified and we can associate to $\pi_p$ a semisimple conjugacy class $\{A_\pi(p)\}$ in $\GL_n(\mathbb C)$. Such a conjugacy class is parameterized by its eigenvalues $\alpha_{1, p}, \ldots, \alpha_{n, p}$. The local Euler factors $L_p(s, \pi_p)$ are given by 
$$
L_p(s, \pi_p)=\det(1-A_\pi(p) p^{-s})^{-1}= \prod_{j=1}^n (1-\alpha_{j, p}p^{-s})^{-1}. 
$$
At the ramified finite primes, the local factors are best described by the Langlands parameters of $\pi_p$ (see for instance the Appendix in \cite{SarnakRudnick1996}). They are of the form $L_p(s, \pi_p) = P_p(p^{-s})^{-1}$, where $P_p(x)$ is a polynomial of degree at most $n$, and $P_p(0)$ = 1. We will in this case too write the local factors in the form above, with the convention that we now allow some of the $\alpha$'s to be zero. The global $L$-function attached to $\pi$ is given by
\begin{align*}
L(s, \pi)= \prod_p L_p(s, \pi_p) &=\prod_p  \prod_{j=1}^n (1-\alpha_{j, p} p^{-s})^{-1}, 
\end{align*}

By the works of Godement--Jacquet \cite{GodementJacquet1972} and Jacquet--Shalika \cite{JacquetShalika1981}, $L(s, \pi)$ defines a holomorphic function for  $s \in \mathbb C$ with $\Re(s)>1$ and admits a meromorphic continuation to the entire complex plane.  For $\Re(s)>1$  we have that 
\begin{equation} \label{h}
-\frac{L'(s, \pi)}{L(s, \pi)} = \sum_{n=1}^{\infty} \frac{\Lambda(n) a_\pi(n)}{n^s}, 
\end{equation}
where $\Lambda(n)$ is the von-Mangoldt function and 
$$
a_{\pi}(p^k)=\alpha_{1, p}^k+ \cdots +\alpha_{n, p}^k. 
$$

 Sarnak and Rudnick \cite{SarnakRudnick1996} have shown that if $\pi_p$ is unramified, then $|\alpha_{j, p}|< p^{\frac{1}{2}-\frac{1}{n^2+1}} $. In this paper, we will often assume the following stronger bound.

\begin{conjecture}[The Ramanujan-Petersson Conjecture]
For any $p$ such that $\pi_p$ is uramified, $|\alpha_{j, p}|= 1$ for all $j \in \{1, \ldots, n\}$. 
\end{conjecture}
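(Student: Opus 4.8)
The final statement is the Ramanujan–Petersson Conjecture itself, stated as a conjecture, so there is nothing to prove; instead I will explain what role this conjecture plays and sketch how one would proceed in the cases where it (or a sufficient substitute) is known.

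\medskip

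\textbf{Remarks on the Ramanujan--Petersson Conjecture.} The statement displayed above is a conjecture, not a theorem, so no proof is expected; rather, we record here the cases in which it is known and the substitute bounds we will use when it is not. First, for $n=1$ the representation $\pi$ is a Hecke character of $\mathbb A_\mathbb Q^\times$, and unitarity forces $|\alpha_{1,p}|=1$ at every unramified $p$, so the conjecture holds trivially. For $n=2$, if $\pi$ corresponds to a holomorphic cuspidal newform the conjecture is a theorem of Deligne (deduced from the Weil conjectures), while if $\pi$ corresponds to a Maass form only the bound $|\alpha_{j,p}| \le p^{7/64}$ of Kim--Sarnak is known; these ramified-place and archimedean considerations do not affect the finitely many Euler factors we must treat separately. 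In the applications of Section~\ref{Section4} the relevant representations — those attached to Dirichlet characters, to the modular form $\Delta$, and to elliptic curves over $\mathbb Q$ (via modularity) — all fall into the cases where Ramanujan--Petersson is a theorem, so Theorems~\ref{Thm1} and~\ref{satakebias} are genuinely unconditional in these instances apart from the Riemann Hypothesis (and, for Corollary~\ref{corC}, the Birch--Swinnerton-Dyer conjecture).

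\medskip

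\textbf{What the hypothesis buys us.} The role of the conjecture in the arguments that follow is twofold. First, it guarantees that the coefficients $a_\pi(p^k) = \alpha_{1,p}^k + \cdots + \alpha_{n,p}^k$ satisfy $|a_\pi(p^k)| \le n$, so that the contribution of the prime-power terms with $k \ge 2$ to $\psi(x,\pi) = \sum_{p^k \le x} \log p \cdot a_\pi(p^k)$ is $O(\sqrt{x})$, and more importantly so that $\sum_p \sum_{k \ge 2} \frac{|a_\pi(p^k)|}{k\,p^{k/2}} < \infty$; this convergence is exactly what lets one pass from the logarithm of the partial Euler product $\sum_{p \le x} \sum_{k \ge 1} \frac{a_\pi(p^k)}{k\, p^{k/2}}$ to the single sum $\sum_{p \le x} \frac{a_\pi(p)}{\sqrt p}$ up to an $O(1)$ (in fact convergent) error, which is the shape needed for Theorem~\ref{satakebias}. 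Second, it ensures that the Dirichlet series $-L'(s,\pi)/L(s,\pi) = \sum_n \Lambda(n) a_\pi(n) n^{-s}$ has coefficients of size $O(\Lambda(n))$, so the explicit formula (Proposition~\ref{explicit}) and the partial summation estimates in the proof of Theorem~\ref{refinement} go through with the clean error terms quoted there. Where only the Kim--Sarnak-type bound $|\alpha_{j,p}| \le p^{\theta}$ with $\theta < 1/2$ is available, one could still run the argument by absorbing the $k \ge 2$ terms using $\theta < 1/2$ and adjusting the error terms accordingly, but the finite-logarithmic-measure asymptotics would then carry an extra error of size $O(x^{2\theta - 1/2 + \epsilon})$, which is why we state the results under the full conjecture. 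In short, there is no obstacle to overcome here: the "main difficulty" is simply that the conjecture is open in general, and the honest statement is that our main theorems are conditional on it precisely to the extent that it is conditional, while being unconditional for all $L$-functions arising in our applications.
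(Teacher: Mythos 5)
You are right that this is an open conjecture with no proof in the paper: the paper merely states it and remarks that it is known for $n=1$ and for modular $L$-functions of degree two via Deligne and Deligne--Serre, which matches your discussion of the known cases and of the role the hypothesis plays in the later arguments. Nothing further is required here.
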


The Ramanujan-Petersson Conjecture is known in certain cases (for instance when $n=1$ and by the works \cite{Deligne1974} and \cite{DeligneSerre1974}, for all modular $L$-functions of degree two). The completed $L$-function of $L(s, \pi)$ is given by 
$
\Lambda(s, \pi) = Q(\pi)^{s/2} L_{\infty}(s, \pi) L(s, \pi), 
$
where  $Q(\pi)$ is the conductor of the representation and the archimedean factor is given by 
$
L_\infty(s, \pi) =\prod_{j=1}^n \Gamma_\mathbb R(s+\mu_\pi(j)), 
$
where the $\mu_\pi(j)$'s are certain constants and $\Gamma_\mathbb R(s)=\pi^{-s/2} \Gamma(s/2)$. The completed $L$-function satisfies a functional equation
$$
\Lambda(s, \pi) = \epsilon(\pi) \Lambda(1-s, \pi^\vee), 
$$
where $\epsilon(\pi) \in \mathbb C$ is a complex number of absolute one and $\pi^\vee$ is the contragradient representation of $\pi$.  The set of trivial zeros of $L(s, \pi)$ is $\{-2k-\mu_{\pi}(j): k \in \mathbb N \text{ and } j \in \{1, \ldots, n\}\}$. The Riemann Hypothesis for $L(s, \pi)$ is the statement that all the non-trivial zeros of $L(s, \pi)$ are on the line $\textrm{Re}(s)=\frac{1}{2}$.

\section{Proof of Theorem \ref{Thm1} }

Throughought this section, we let $\pi$ be an irreducible cuspidal automorphic representation of $\textrm{GL}_n(\mathbb A_\mathbb Q)$ such that the associated $L$-function $L(s, \pi)$ is entire. We begin by establishing a suitable version of an explicit formula for $L(s, \pi)$.  

\begin{proposition}  \label{explicit}
Let $s \in \mathbb C \setminus{ \{\frac{1}{2} \} } $ be a complex number such that $L(s, \pi) \neq 0$. We have that 
$$
\sum_{n \leq x} \frac{\Lambda(n) a_\pi(n) }{n^s} = -m \cdot \frac{x^{ \frac {1}{2} } -s}{\frac{1}{2}-s}-\frac{L'(s, \pi)}{L(s, \pi)}-\sum_{\rho \neq \frac{1}{2} } \frac{x^{\rho-s}}{\rho-s}+ \sum_{j=1}^n  \sum_{k=0}^{\infty} \frac{x^{-2k-\mu_\pi(j)-s} }{2k+\mu_\pi(j)+s}, 
$$
where $m=\ord_{s=\frac{1}{2}} L(s, \pi)$, the sum over $\rho$ is taken over all non-trivial zeros of $L(s, \pi)$ (excluding $\rho=\frac{1}{2}$) counting multiplicity and is interpreted as $\lim_{T \to \infty} \sum_{ |\gamma| \leq T}$, and where the last term of the sum on the left hand side is weighted by half if $x$ is an integer. 
\end{proposition}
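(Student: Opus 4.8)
The plan is to derive this explicit formula by applying the classical Perron-type contour integration to the logarithmic derivative $-L'/L(s+w,\pi)$ as a function of $w$, and then sweeping the contour to the left so as to pick up residues at the relevant poles. Concretely, recall that for $\Re(w)$ large, $-\frac{L'}{L}(s+w,\pi) = \sum_{n\geq 1} \frac{\Lambda(n)a_\pi(n)}{n^{s+w}}$ by Equation \eqref{h}. The truncated sum $\sum_{n\leq x}\frac{\Lambda(n)a_\pi(n)}{n^s}$ should then be recovered via a Perron integral $\frac{1}{2\pi i}\int_{(c)} \Big(-\frac{L'}{L}(s+w,\pi)\Big)\frac{x^w}{w}\,dw$ with the usual convention that terms with $n=x$ are halved. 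First I would fix $c>0$ large enough that $s+c$ lies in the region of absolute convergence, then shift the line of integration to $\Re(w)=-\infty$ (more precisely, use truncated contours at height $T$ and a sequence of left endpoints, then let $T\to\infty$ and the left endpoint go to $-\infty$), collecting residues along the way.

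The residues are exactly the four terms on the right-hand side. The pole of $x^w/w$ at $w=0$ contributes $-\frac{L'}{L}(s,\pi)$ (this uses $L(s,\pi)\neq 0$, so there is no pole of $-L'/L$ at $w=0$, and $s\neq\tfrac12$). The zero of $L$ at $s=\tfrac12$, of order $m$, produces a pole of $-\frac{L'}{L}(s+w,\pi)$ at $w=\tfrac12-s$ with residue $m$; combined with the factor $x^w/w$ evaluated via its Laurent expansion at $w=\tfrac12-s$ one gets $-m\cdot\frac{x^{1/2-s}}{1/2-s}$ — but one must be careful: the statement has $-m\cdot\frac{x^{1/2}-s}{1/2-s}$, which I read as the combined residue of $\frac{m}{w}\cdot\frac{x^w}{x^{1/2-s}}\cdot(\dots)$; in fact the cleanest route is to note that the simple pole of $-\frac{L'}{L}(s+w,\pi)$ at $w=\tfrac12-s$ times $\frac{x^w}{w}$ has residue $m\cdot\frac{x^{1/2-s}}{1/2-s}$, and there is a separate contribution. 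I would double-check the exact bookkeeping here against the claimed shape $-m\frac{x^{1/2}-s}{1/2-s}$ (likely this is the author's way of packaging the $w=0$ and $w=\tfrac12-s$ residues arising from a slightly different integrand, e.g.\ integrating $-\frac{L'}{L}(s+w,\pi)\frac{x^w-1}{w}$ or similar). The remaining nontrivial zeros $\rho\neq\tfrac12$ give $-\sum_{\rho\neq 1/2}\frac{x^{\rho-s}}{\rho-s}$, and the trivial zeros at $w=-2k-\mu_\pi(j)-s$ give $+\sum_{j,k}\frac{x^{-2k-\mu_\pi(j)-s}}{2k+\mu_\pi(j)+s}$.

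For the analytic justification I would use standard bounds: the estimate $\frac{L'}{L}(\sigma+it,\pi)\ll \log(|t|+2)$ away from zeros together with $\sum_{|\gamma-t|\leq 1}1 \ll \log(|t|+2)$ for the zero-counting, plus convexity/Stirling bounds on $L_\infty$, to show that the horizontal segments at height $T$ contribute $o(1)$ along a suitable sequence $T\to\infty$ (chosen to avoid zeros), and that the left-moving vertical segment tends to $0$ because $x^w\to 0$ while $\frac{L'}{L}(s+w,\pi)$ grows at most polynomially in $|\Re(w)|$ via the functional equation. The interchange of the limit in $T$ with the sum over $\rho$ is what produces the stipulated interpretation $\lim_{T\to\infty}\sum_{|\gamma|\leq T}$.

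\textbf{Main obstacle.} The delicate point is the treatment of the central zero at $s=\tfrac12$ and getting the precise constant/shape $-m\cdot\frac{x^{1/2}-s}{1/2-s}$ correct — in particular disentangling the residue at $w=\tfrac12-s$ from the residue at $w=0$, since when $s$ is close to $\tfrac12$ these nearly collide, and the factor $x^w/w$ must be expanded carefully. The rest is a routine, if somewhat lengthy, contour-shifting argument of the type used to prove the classical explicit formula for $\psi(x)$, adapted to the $\GL_n$ setting using the growth bounds recorded in Section \ref{Section2}.
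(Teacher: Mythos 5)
Your approach is essentially identical to the paper's: apply Perron's formula to $-\frac{L'}{L}(s+z,\pi)\,\frac{x^z}{z}$, shift the contour left, and collect residues at $z=0$, $z=\tfrac12-s$, the nontrivial zeros $\rho-s$, and the trivial zeros $-2k-\mu_\pi(j)-s$. Your identification of all four residue contributions is correct.

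The ``main obstacle'' you flag is not a mathematical obstacle but a typographical error in the paper's statement: the term $\frac{x^{\frac{1}{2}}-s}{\frac{1}{2}-s}$ should read $\frac{x^{\frac{1}{2}-s}}{\frac{1}{2}-s}$, with the whole quantity $\tfrac12-s$ in the exponent. This is exactly the $-m\cdot\frac{x^{1/2-s}}{1/2-s}$ you computed, and it is confirmed by the paper's own use of the formula: in Theorem \ref{mainthm} this term is integrated from $s_0$ to $\infty$ and produces $-m\cdot\Li(x^{1/2-s_0})$, which is only possible if the integrand is $\frac{x^{1/2-s}}{1/2-s}$. So you should discard the hypotheses about a ``separate contribution'' or an alternative integrand such as $\frac{x^w-1}{w}$; the standard Perron kernel $\frac{x^w}{w}$ is what is used, and there is no near-collision of residues since $s\neq\tfrac12$ is assumed.

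One small internal slip in your write-up: you state that the simple pole of $-\frac{L'}{L}(s+w,\pi)$ at $w=\tfrac12-s$ has residue $m$. In fact $\frac{L'}{L}$ has residue $m$ there (the order of the zero), so $-\frac{L'}{L}$ has residue $-m$, and multiplying by $\frac{x^w}{w}$ at $w=\tfrac12-s$ gives residue $-m\cdot\frac{x^{1/2-s}}{1/2-s}$. Your final expression is right; the intermediate sentence has the sign of the residue of $-L'/L$ flipped. With that fixed, and with the typo in the statement corrected, your proof sketch matches the paper's argument.
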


\begin{proof}
%This is a standard argument to prove explicit formulas in analytic number theory. 
By using Perron's formula and Equation \eqref{h}, we have that 
\begin{equation*}
\sum_{n \leq x} \frac{\Lambda(n) a_\pi(n)}{n^s} = \frac{1}{2 \pi i} \int_{c- i \infty}^{c+ i \infty} \left( \sum_{n=1}^{\infty} \frac{\Lambda(n) a_\pi(n)}{n^{s+z}} \right) \frac{x^z}{z} dz=  \frac{1}{2 \pi i} \int_{c- i \infty}^{c+ i \infty}  - \frac{L'(s+z, \pi)}{L(s+z, \pi)} \frac{x^z}{z} dz
\end{equation*}
for $c \in \mathbb R$ sufficiently large.  By shifting the contour to the left and applying Cauchy's residue theorem, the above integral equals the sum of residues of the integrand in the region $\Re(z)\leq c$.   
\begin{itemize}
\item When $\displaystyle{s+z=\frac{1}{2}, \res_{z=\frac{1}{2}-s} \left( \frac{L'(s+z, \pi)}{L(s+z, \pi)} \right)=m}$,  so   $\displaystyle{\res_{z=\frac{1}{2}-s} \left( - \frac{L'(s+z, \pi)}{L(s+z, \pi)} \frac{x^z}{z} \right)=-m \cdot \frac{x^{ \frac {1}{2} } -s}{\frac{1}{2}-s}}$. 
\item When $z=0$, $\displaystyle{\res_{z=0} \left( - \frac{L'(s+z, \pi)}{L(s+z, \pi)} \frac{x^z}{z} \right)=-\frac{L'(s, \pi)}{L(s, \pi)}}$. 
\item If $\rho \neq \frac{1}{2}$ is a non-trivial zero of $L(s, \pi)$, $\displaystyle{\res_{z=\rho-s} \left( - \frac{L'(s+z, \pi)}{L(s+z, \pi)} \frac{x^z}{z} \right)=- r_{\rho} \cdot \frac{x^{\rho-s}}{\rho-s}}$, where $r_\rho$ is the order of the zero $\rho$. Thus, we obtain a total contribution of $\displaystyle{-\sum \limits _{\rho \neq \frac{1}{2}}\frac{x^{\rho-s}}{\rho-s}}$ counting multiplicity for all the non-trivial zeros of $L(s, \pi)$. 
\vspace{2mm}
\\
\item Finally, the trivial zeros of $L(s, \pi)$ are at $\{-2k-\mu_{\pi}(j): k \in \mathbb N \text{ and } j \in \{1, \ldots, n\}\}$. Thus,  since  $\displaystyle{\res_{z=-2k-\mu_\pi(j)-s} \left( - \frac{L'(s+z, \pi)}{L(s+z, \pi)} \frac{x^z}{z} \right)=\frac{x^{-2k-\mu_\pi(j)-s} }{2k+\mu_\pi(j)+s}}$ for all $k \geq 0$ and $j \in \{1, 2, \ldots, n \}$, we get a total contribution of $\displaystyle{ \sum_{j=1}^n  \sum_{k=0}^{\infty} \frac{x^{-2k-\mu_j(\pi)-s}}{2k+\mu_j(\pi)+s}}$. 
\end{itemize}
\end{proof}

We now note that 
\begin{align*}  \label{deuler}
\frac{d}{ds}  \left( \log \prod_{p \leq x} \prod_{j=1}^n (1-\alpha_{j, p}p^{-s})^{-1} \right) = -\sum_{j=1}^n \sum_{p \leq x}  \frac {\log p \cdot \alpha_{j,p}}{p^s-\alpha_{j,p}}  \\
\end{align*} 

and for each $j \in \{1, \ldots, n\}$, we write 

\begin{equation*}
\sum_{p \leq x}  \frac {\log p \cdot \alpha_{j,p}}{p^s-\alpha_{j,p}} = \sum_{\substack{p \leq x }}  \frac{\log p  \cdot \alpha_{j,p}} {p^s}+\sum_{\substack{p \leq x}}  \frac{\log p \cdot \alpha_{j,p}^2}{p^{2s}}+ \sum_{k \geq 3} \sum_{\substack{p \leq x }} \frac{\log p \cdot   \alpha_{j,p} ^{k} }{p^{ks}}. 
\end{equation*}

It thus follows that 
\begin{align*}
 \frac{d}{ds}  & \left( \log \prod_{p \leq x}  \prod_{j=1}^n (1-\alpha_{j, p}p^{-s})^{-1} \right) = -\sum_{j=1}^n \sum_{p \leq x}  \frac {\log p \cdot \alpha_{j,p}}{p^s-\alpha_{j,p}} = -\sum_{n \leq x} \frac{\Lambda(n) a_\pi(n) }{n^s} \\ &- \sum_{\substack{\sqrt x < p \leq x}} \frac{ \log p \cdot (\alpha_{1,p}^2+ \cdots +\alpha_{n,p}^2) }{p^{2s}} -\sum_{k \geq 3} 
 {\sum_{\substack{ x^{1/k}<p \leq x }}}\frac{ \log p \cdot  (\alpha_{1,p}^k+ \cdots +\alpha_{n,p}^k) }{p^{ks}}.
\end{align*}

\begin{theorem} \label{mainthm} 
Assume the Ramanujan--Petersson Conjecture and the Riemann Hypothesis for $L(s, \pi)$. Then for a complex number $s \in \mathbb C$ with $\frac{1}{2}< \Re(s)< 1$,  we have that 
\begin{equation*}
\prod_{p \leq x}  \prod_{j=1}^n (1-\alpha_{j, p} p^{-s})^{-1} = L(s, \pi)  \exp \left (-m \cdot \Li(x^{\frac{1}{2}-s}) - R_s(x) 
+ U_s(x)+ O\left( \frac{\log x}{x^{1/6}} \right)  \right), 
\end{equation*}
where we have set 
\begin{itemize}
\item $m=\ord \limits _{s=\frac{1}{2} } L(s, \pi)$,  \item $\Li(x)$ is the principal value of $\displaystyle{\int \limits _0^x \frac{dt}{ \log t}}$
\item $
\displaystyle{R_s(x) = \frac{1}{\log x} \sum \limits_{ \rho \neq \frac{1}{2} } \frac{x^{\rho-s}}{ \rho-s}+ \frac{1}{ \log x} \sum \limits_{\rho \neq \frac{1}{2} } \int_{s}^{\infty} \frac{x^{\rho-z}}{(\rho-z)^2}dz}$ 
\item 
$\displaystyle{U_s(x)= \sum \limits _{\substack{\sqrt x < p \leq x}} \frac{(\alpha_{1,p}^2+ \cdots +\alpha_{n,p}^2) }{2 p^{2s}}}$. 
\end{itemize}
Here, the sums in the term $R_s(x)$ are taken over all non-trivial zeros $\rho=\frac{1}{2} +i \gamma$ of $L(s, \pi)$ (excluding $\rho=\frac{1}{2}$) counted with multiplicity and are interpreted as $\lim _{T \to \infty} \sum  _{ |\gamma| \leq T}$,  and the integral is taken along the horizontal line starting at $s$.  
\end{theorem}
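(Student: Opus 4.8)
The plan is to integrate the logarithmic-derivative identity displayed immediately above the theorem along the horizontal ray from $s$ to $+\infty$ and read off each resulting term. Write $F(w) = \log \prod_{p \le x} \prod_{j=1}^n (1-\alpha_{j,p}p^{-w})^{-1}$; for fixed $x$ this is holomorphic on $\Re(w) > \tfrac12$ (each local factor is holomorphic and nonvanishing there, by the Sarnak--Rudnick bound) and $F(w) \to 0$ as $\Re(w) \to +\infty$, so $F(s) = -\int_s^\infty F'(w)\,dw$ along that ray. Into $F'(w) = -\sum_{n \le x} \Lambda(n)a_\pi(n)n^{-w} - \sum_{\sqrt x < p \le x} \tfrac{\log p\,(\alpha_{1,p}^2 + \cdots + \alpha_{n,p}^2)}{p^{2w}} - \sum_{k \ge 3}\sum_{x^{1/k}<p\le x}\tfrac{\log p\,(\alpha_{1,p}^k+\cdots+\alpha_{n,p}^k)}{p^{kw}}$ I substitute Proposition \ref{explicit} for the first sum. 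This reduces the problem to integrating termwise the four explicit contributions (pole at $\tfrac12$, $-L'/L$, nontrivial zeros, trivial zeros) and the two prime-power sums.

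Then I would identify the terms one by one. The $-L'(w,\pi)/L(w,\pi)$ contribution integrates to $\log L(s,\pi)$ (using $L(w,\pi) \to 1$), giving the prefactor. For the pole at $\tfrac12$, the substitution $t = (\tfrac12 - w)\log x$ identifies $\int_s^\infty \tfrac{x^{1/2-w}}{\frac12 - w}\,dw$ with the exponential integral $\int_{-\infty}^{(1/2-s)\log x} \tfrac{e^t}{t}\,dt = \Li(x^{1/2-s})$, producing $-m\,\Li(x^{1/2-s})$. For the nontrivial zeros, integrating $\int_s^\infty \tfrac{x^{\rho-w}}{\rho-w}\,dw$ by parts (with $dv = x^{\rho-w}\,dw$, $v = -x^{\rho-w}/\log x$) gives $\tfrac{1}{\log x}\tfrac{x^{\rho-s}}{\rho-s} + \tfrac{1}{\log x}\int_s^\infty \tfrac{x^{\rho-z}}{(\rho-z)^2}\,dz$, so summing over $\rho \ne \tfrac12$ yields exactly $-R_s(x)$. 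Finally $\int_s^\infty \log p \cdot p^{-2w}\,dw = \tfrac{1}{2p^{2s}}$, so the second prime-power sum contributes $U_s(x)$; exponentiating $F(s)$ then gives the claimed formula up to the error term.

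The remaining contributions must be shown to be $O(\log x / x^{1/6})$. The trivial-zero sum, once integrated, is $\ll x^{-1/2}/\log x$ (using $\Re(s) > \tfrac12$ together with nonnegativity of $\Re\mu_\pi(j)$, or the known unconditional approximations toward it), hence absorbed. For the $k \ge 3$ tail, the Ramanujan--Petersson conjecture gives $|\alpha_{1,p}^k + \cdots + \alpha_{n,p}^k| \le n$, and after integration the tail is $\ll \sum_{3 \le k \le \log_2 x} \tfrac{1}{k}\sum_{x^{1/k}<p\le x} p^{-k\Re(s)} \ll \sum_{3 \le k \le \log_2 x} x^{1/k - \Re(s)} \ll (\log x)\, x^{-1/6}$, where the decay rate is worst (but still $< x^{-1/6}$) at $k=3$ and the $\log x$ counts the admissible $k$; the terms with $k > \log_2 x$ are negligible. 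One may assume $x$ is not a prime power, so the boundary discrepancies between $\sum_{n\le x}$ and $\sum_{p\le x}$ in the explicit formula are $O(1/x)$ and do not affect the asymptotic.

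The hard part will be justifying the termwise integration of the sum over nontrivial zeros. The series $\sum_{\rho \ne 1/2} \tfrac{x^{\rho-w}}{\rho-w}$ is only conditionally convergent — in the prescribed symmetric sense $\lim_{T\to\infty}\sum_{|\gamma|\le T}$ — so one must verify that this limit commutes with $\int_s^\infty dw$ and with the integration by parts above. Here the Riemann Hypothesis is essential: it makes $|x^{\rho-w}| = x^{1/2-\Re(w)}$ bounded on the ray and independent of $\gamma$, so that, combined with the Riemann--von Mangoldt estimate $N(T,\pi) \sim \tfrac{T}{\pi}\log T$ and Abel summation, one obtains a bound for the partial sums $\sum_{|\gamma| \le T} \tfrac{x^{\rho-w}}{\rho-w}$ uniform for $w$ on the ray; dominated convergence then legitimizes all the interchanges. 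Establishing this uniform control is the technical core of the argument, after which the identification of the four main terms is a routine computation.
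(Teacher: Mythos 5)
Your proposal follows the same route as the paper: substitute the explicit formula (Proposition \ref{explicit}) into the logarithmic derivative of the truncated Euler product, integrate along the horizontal ray from $s$ to $+\infty$, and identify each resulting integral with $-m\,\Li(x^{1/2-s})$, $\log L(s,\pi)$, $-R_s(x)$, $U_s(x)$, and the two error contributions (trivial zeros and the $k\ge 3$ tail). Your term-by-term identifications, including the substitution $t=(\tfrac12-w)\log x$ for the $\Li$ term, the integration by parts producing $R_s(x)$, and the $O((\log x)x^{-1/6})$ bound coming from the $k\ge3$ range under Ramanujan--Petersson with $\Re(s)>\tfrac12$, all match what the paper does (the paper itself defers these computations to its citation of an earlier work, but they are the same). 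You also correctly flag the one genuinely delicate point, the interchange of the conditionally convergent sum over nontrivial zeros with the integral over the ray, which the paper likewise leaves implicit; your sketch of the justification via the truncated explicit formula, the RH bound $|x^{\rho-w}|=x^{1/2-\Re(w)}$, and the zero-counting estimate is a reasonable outline of the standard argument, though one should be careful that a straightforward dominated-convergence invocation needs uniform-in-$T$ control of partial sums over the whole ray (typically obtained by splitting the ray into a compact piece, where the truncated-Perron error tends to zero uniformly, and a tail, where exponential decay of $x^{1/2-\Re(w)}$ dominates).
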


\begin{proof}
 For all $s \in \mathbb C$ with $\frac{1}{2}< \Re(s)< 1$,  we have using Proposition \ref{explicit} and the previous equation that
\begin{align*} 
\frac{d}{ds} &  \left( \log \prod_{p \leq x} \prod_{j=1}^n (1-\alpha_{j, p}p^{-s})^{-1} \right) =  m \cdot \frac{x^{ \frac {1}{2} } -s}{\frac{1}{2}-s}+\frac{L'(s, \pi)}{L(s, \pi)}+\sum_{\rho \neq \frac{1}{2}} \frac{x^{\rho-s}}{\rho-s}- \sum_{j=1}^n  \sum_{k=0}^{\infty} \frac{x^{-2k-\mu_\pi(j)-s} }{2k+\mu_\pi(j)+s} \\
&-\sum_{\substack{\sqrt x < p \leq x}} \frac{ \log p \cdot (\alpha_{1,p}^2+ \cdots +\alpha_{n,p}^2) }{p^{2s}}-\sum_{k \geq 3} 
 {\sum_{\substack{ x^{1/k}<p \leq x }}}\frac{ \log p \cdot (\alpha_{1,p}^k+ \cdots +\alpha_{n,p}^k) \cdot  }{p^{ks}} \\. 
\end{align*}
We now fix $s_0 \in \mathbb C$ with $\frac{1}{2} < \Re(s)< 1$. Integrating this equation along the horizontal straight line from $s_0$ to $\infty$, it follows that

\begin{align*} \label{logeulerproduct}
 \log \prod_{p \leq x} \prod_{j=1}^n (1-\alpha_{j, p}p^{-s_0})^{-1} &=  -m \int_{s_0}^{\infty} \frac{x^{ \frac {1}{2} } -s}{\frac{1}{2}-s}-\int_{s_0}^{\infty} \frac{L'(s, \pi)}{L(s, \pi)}-\sum_{\rho \neq \frac{1}{2} } \int_{s_0}^{\infty}  \frac{x^{\rho-s}}{\rho-s} \nonumber \\ &+ \int_{s_0}^{\infty} \sum_{j=1}^n  \sum_{k=0}^{\infty} \frac{x^{-2k-\mu_\pi(j)-s} }{2k+\mu_\pi(j)+s} 
+\sum_{\substack{\sqrt x < p \leq x}} \int_{s_0}^{\infty}  \frac{ \log p \cdot (\alpha_{1,p}^2+ \cdots +\alpha_{n,p}^2) }{p^{2s}}+ \nonumber \\ & +\sum_{k \geq 3} 
 {\sum_{\substack{ x^{1/k}<p \leq x }}} \int_{s_0}^{\infty}  \frac{ \log p \cdot  (\alpha_{1,p}^k+ \cdots +\alpha_{n,p}^k) \cdot  }{p^{ks}} \nonumber \\
\end{align*}

These integrals can now be analysed in an analogous way as in \cite[Theorem 2.3]{Sheth2023} to obtain the desired formula for the partial Euler product in the critical strip.  
\end{proof}

\begin{remark}
The implied constant in the big O term above (and in all big O terms appearing in the paper) depends only the automorphic representation $\pi$ and is independent of $s$. 
\end{remark}

Let $\psi(x, \pi)=\sum_{n \leq x} \Lambda(n) a_n(\pi)$.  It is known that, under the Riemann Hypothesis for $L(s, \pi)$, 
$$
\psi(x, \pi)= O( \sqrt x (\log x)^2). 
$$
Using a method introduced by Gallagher in \cite{Gallagher1980}, it is possible to improve the error term outside a set of finite logarithmic measure.

\begin{theorem} \label{refinement}
Let $\pi$ be an irreducible cuspidal representation of $\GL_n(\mathbb A_\mathbb Q)$ such that $L(s, \pi)$ is entire. Assume the Riemann Hypothesis for $L(s, \pi)$.  Then there exists a set $S$ of finite logarithmic measure such that for all $x \not \in S$, 
$$
\psi(x, \pi) \ll  \sqrt x (\log \log x)^2. 
$$
\end{theorem}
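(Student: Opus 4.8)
The plan is to run Gallagher's argument from \cite{Gallagher1980}: feed a truncated explicit formula for $\psi(x,\pi)$ into his mean-value lemma for exponential sums over the zeros, and then tune the truncation so that the exceptional sets — one per dyadic block — have \emph{summable} measure, whence their union has finite logarithmic measure. Note that only the Riemann Hypothesis is needed; the Ramanujan--Petersson conjecture plays no role here.

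First, the truncated explicit formula. Via Perron's formula and a contour shift, exactly as in the proof of Proposition~\ref{explicit} but stopping the truncation of the zero sum at a finite height $T_0 \geq 2$, one obtains under the Riemann Hypothesis
$$\psi(x,\pi) \;=\; -\sqrt{x}\sum_{|\gamma|\le T_0}\frac{x^{i\gamma}}{\tfrac12+i\gamma} \;+\; O\!\left(\frac{x(\log(Q(\pi)xT_0))^2}{T_0}\right) + o(\sqrt{x})$$
for $x \geq 2$, the sum running over the imaginary parts $\gamma$ of the non-trivial zeros $\rho = \tfrac12+i\gamma$ with multiplicity; the lower-order terms absorb the archimedean/trivial-zero contribution and the constant $-\tfrac{L'}{L}(0,\pi)$, and the only input beyond RH is the unconditional bound that the number of zeros with imaginary part in a unit interval around $T$ is $\ll \log(Q(\pi)(|T|+2))$. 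Setting $x = e^{u}$ and fixing a dyadic block $u \in [U,2U]$, I would take $T_0 = e^{2U}$, which makes the two error terms $O(U^2) = o(\sqrt{x})$, hence negligible against $\sqrt{x}(\log\log x)^2$.

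Next, splitting the zero sum. Fix $A > 2$ (say $A = 3$) and split at height $Y = U^{A}$ into a low part $\sum_{|\gamma|\le Y}$ and a high part $\Sigma_U(u) := \sum_{Y < |\gamma| \le e^{2U}} (\tfrac12+i\gamma)^{-1} e^{iu\gamma}$ — both ranges independent of $u$ inside the block. The low part is bounded pointwise by the zero count: $\sum_{|\gamma|\le Y}|\tfrac12+i\gamma|^{-1} \ll (\log Y)^2 = A^2(\log U)^2 \asymp (\log\log x)^2$ for $u \in [U,2U]$. For the high part, Gallagher's lemma — $\int_{U}^{2U}\bigl|\sum_\gamma c_\gamma e^{iu\gamma}\bigr|^2 du \ll U^2\int_{\R}\bigl(\sum_{|\gamma-v|\le 1/U}|c_\gamma|\bigr)^2 dv$ — combined with $|c_\gamma| \ll |\gamma|^{-1}$ and the unit-window zero bound $\ll \log(|v|+2)$ gives
$$\int_{U}^{2U}|\Sigma_U(u)|^2\,du \;\ll\; U^2\int_{Y}^{\infty}\frac{(\log v)^2}{v^2}\,dv \;\ll\; U^2\cdot\frac{(\log U)^2}{U^{A}} \;=\; U^{2-A}(\log U)^2 .$$

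Finally, the exceptional set. Put $\mathcal E_U := \{u\in[U,2U]:|\Sigma_U(u)| > 1\}$; Chebyshev's inequality and the previous display give $\operatorname{meas}(\mathcal E_U) \ll U^{2-A}(\log U)^2$. Applying this with $U$ over $\{2^{j}\}_{j\ge j_0}$, whose blocks tile $[2^{j_0},\infty)$, and setting $\mathcal E := [\log 2, 2^{j_0}] \cup \bigcup_{j\ge j_0}\mathcal E_{2^{j}}$, the condition $A > 2$ yields $\operatorname{meas}(\mathcal E) \ll 2^{j_0} + \sum_{j\ge j_0} 2^{(2-A)j}j^2 < \infty$; hence $S := \exp(\mathcal E) \subseteq \R_{\ge 2}$ has $\mu^{\times}(S) = \int_{\mathcal E} du < \infty$. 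For $x = e^{u} \notin S$ with $u \in [2^{j},2^{j+1}]$ one has $|\Sigma_{2^{j}}(u)| \le 1$, so combining the three estimates (and $\log 2^{j} \asymp \log u = \log\log x$),
$$\frac{|\psi(x,\pi)|}{\sqrt{x}} \;\ll\; A^2(\log 2^{j})^2 + 1 + o(1) \;\ll\; (\log\log x)^2 ,$$
which is the assertion. I expect the crux to be the balancing in the splitting step: $T_0$ must be large enough to annihilate the explicit-formula error, while $Y = U^{A}$ must be small enough that the low part stays $O((\log\log x)^2)$ pointwise yet large enough that Gallagher's lemma delivers the \emph{genuine decay} $U^{2-A}$ of the block second moment rather than mere boundedness — and it is precisely this decay that forces the total exceptional set to be finite rather than only locally thin.
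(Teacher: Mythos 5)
Your proof is correct and is precisely the Gallagher-type argument that the paper relies on: the paper's own "proof" is a one-line reference to \cite{Gallagher1980}, \cite{Qu2007}, \cite{Koyama2016}, and \cite{Sheth2023}, all of which run this same method (truncated explicit formula under RH, dyadic blocking in $u=\log x$, Gallagher's mean-value lemma for the high zeros, and a Chebyshev bound on the exceptional set). Your remark that the Ramanujan--Petersson conjecture plays no role here is also consistent with the paper's hypotheses, which assume only RH for this theorem.
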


\begin{proof}
The  analog of this result for the prime counting function $\psi(x)$ was first proven by Gallagher \cite{Gallagher1980} and was later generalized in the setting of the theorem by Qu (\cite[Theorem 1.1]{Qu2007}); similar arguments have also been given in \cite[Theorem 2]{Koyama2016} and \cite[Theorem 3.4]{Sheth2023}. 
\end{proof}

As mentioned in the introduction,  we let $R(\pi) =\ord_{s=1} L_2(s, \pi)$ where $L_2(s, \pi)$ is the second-moment $L$-function associated to $L(s, \pi)$ and is defined by 

$$
L_2(s, \pi) = \prod_p \prod_{j=1}^n (1-\alpha_{j, p}^2p^{ -s} )^{-1}. 
$$
As explained in \cite[Example 1]{Devin2020}, there exists an open subset $U \supseteq \{s \in \mathbb C: \Re(s) \geq 1\}$ such $L_2(s, \pi)$ can be continued to a meromorphic function on $U$; thus, $R(\pi)$ is well-defined. We recall that we set $\nu(\pi) = m(\mathrm{sym}^{2} \pi)-m(\wedge^{2} \pi) \in \Z$,  where $m(\rho)$ denotes the multiplicity of the trivial representation $\textbf{1}$ in $\rho$ and that $\nu(\pi)=-R(\pi)$. 

\begin{lemma} \label{mertens}
There is a constant $M$ such that 
$$
\sum_{p \leq x} \frac{\alpha_{1, p}^2+\cdots+\alpha_{n, p}^2}{p}=-R(\pi) \log \log x +M+ o(1). 
$$

\end{lemma}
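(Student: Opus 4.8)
The plan is to relate the Dirichlet series $\sum_p (\alpha_{1,p}^2+\cdots+\alpha_{n,p}^2)p^{-s}$ to the logarithm of the second-moment $L$-function $L_2(s,\pi)$ and then apply a Mertens-type (partial summation) argument near $s=1$. First I would write, for $\Re(s)>1$,
\[
\log L_2(s,\pi) = \sum_p \sum_{j=1}^n \sum_{k\geq 1} \frac{\alpha_{j,p}^{2k}}{k\, p^{ks}} = \sum_p \frac{\alpha_{1,p}^2+\cdots+\alpha_{n,p}^2}{p^s} + \sum_p \sum_{j=1}^n \sum_{k\geq 2} \frac{\alpha_{j,p}^{2k}}{k\,p^{ks}}.
\]
Under the Ramanujan--Petersson conjecture we have $|\alpha_{j,p}|=1$ (for unramified $p$; at the finitely many ramified primes $|\alpha_{j,p}|\le 1$ and these contribute only a bounded amount), so the double sum over $k\ge 2$ converges absolutely and uniformly for $\Re(s)\ge 1$ to a function holomorphic there; call it $G(s)$, with $G(1)$ finite. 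Hence the behaviour of $\sum_p (\alpha_{1,p}^2+\cdots+\alpha_{n,p}^2)p^{-s}$ as $s\to 1^+$ is governed entirely by $\log L_2(s,\pi)$.

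Next I would use the meromorphic continuation of $L_2(s,\pi)$ to the open set $U\supseteq\{\Re(s)\ge 1\}$ recalled just before the lemma (following Devin, Example 1 of \cite{Devin2020}), together with $R(\pi)=\ord_{s=1}L_2(s,\pi)$. Writing $L_2(s,\pi)=(s-1)^{R(\pi)}H(s)$ with $H$ holomorphic and nonvanishing in a neighbourhood of $s=1$, we get
\[
\sum_p \frac{\alpha_{1,p}^2+\cdots+\alpha_{n,p}^2}{p^s} = R(\pi)\log(s-1) + \log H(s) - G(s) =: R(\pi)\log(s-1) + \Phi(s),
\]
with $\Phi$ holomorphic at $s=1$. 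Now set $a_p := \alpha_{1,p}^2+\cdots+\alpha_{n,p}^2$, a real sequence (since it equals $a_\pi(p^2)$, a coefficient of the Dirichlet series of $-L'/L$ which is real on the real line), with $|a_p|\le n$. The identity above says the Dirichlet series $\sum_p a_p p^{-s}$ has, near $s=1$, the same singularity as $-R(\pi)\log\zeta(s)$-type expression, i.e. $R(\pi)\log\frac{1}{s-1}$ but with opposite sign: $\sum_p a_p p^{-s} + R(\pi)\log\frac{1}{s-1} \to \Phi(1) + R(\pi)\cdot(\text{const})$ as $s\to 1^+$. By the standard Mertens-type Tauberian argument — comparing $\sum_p a_p p^{-s}$ with $\int_2^\infty (\sum_{p\le t}a_p/p)\,d(t^{-s}\cdot\text{stuff})$ via Abel summation, or more directly invoking the classical theorem that if $\sum_p a_p p^{-s} = c\log\frac{1}{s-1} + O(1)$ near $s=1$ with $|a_p|$ bounded and the partial sums sufficiently regular, then $\sum_{p\le x} a_p/p = c\log\log x + M + o(1)$ — one extracts the claimed asymptotic with $c = -R(\pi)$.

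The main obstacle is the Tauberian step: passing from the behaviour of the Dirichlet series $\sum_p a_p p^{-s}$ at $s=1$ to the asymptotic for $\sum_{p\le x}a_p/p$ is not automatic for a general bounded sequence $a_p$, since $\log\log x$ grows so slowly that one needs genuine analytic input, not just abscissa-of-convergence arguments. The cleanest route is to avoid a black-box Tauberian theorem and instead argue directly: write $a_\pi(p^2) = \sum_j \alpha_{j,p}^2$ and note $\log L_2(s,\pi) = \sum_{p\le x}a_p/p^s + (\text{error})$ can be fed through partial summation against $\psi$-type sums, using the already-established fact (Theorem~\ref{refinement}, or even just the unconditional zero-free region / classical PNT-type estimate for $L_2$ which suffices here since we only need the $o(1)$ at the level of $\log\log x$) that $\sum_{p\le x}a_\pi(p^2)\log p = -R(\pi)\,x + o(x)$; Mertens' classical derivation of $\sum_{p\le x}1/p = \log\log x + M + o(1)$ from $\sum_{p\le x}\log p/p = \log x + O(1)$ then carries over verbatim with the sequence $a_p$ in place of $1$, the constant $-R(\pi)$ in place of $1$, and bounded $|a_p|$ ensuring all the tail estimates go through. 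I would present this Mertens-style partial summation as the body of the proof and relegate the analytic continuation / pole-order bookkeeping of $L_2$ to a citation of \cite{Devin2020}.
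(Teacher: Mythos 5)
Your overall strategy---relate $\sum_p a_\pi(p^2)p^{-s}$ to $\log L_2(s,\pi)$, exploit the meromorphic continuation and order of vanishing/pole of $L_2$ at $s=1$, and then pass to the discrete asymptotic by a Mertens-style argument---is the right one and is essentially what Conrad does on the cited page~275, which is all the paper itself invokes. So the architecture is sound. However, the quantitative input you feed into the partial-summation step is too weak, and the reference you lean on is misplaced.

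The concrete gap is in the claim that $\sum_{p\le x}a_\pi(p^2)\log p=-R(\pi)\,x+o(x)$ ``suffices here since we only need the $o(1)$ at the level of $\log\log x$.'' The lemma asserts a genuine constant $M$ with error $o(1)$, not merely $o(\log\log x)$. Running partial summation with $\theta(t)=\sum_{p\le t}a_\pi(p^2)\log p=-R(\pi)t+E(t)$ produces a term of the form
$\int_2^x E(t)\,\dfrac{\log t+1}{t^2(\log t)^2}\,dt$,
and $E(t)=o(t)$ only makes this $o(\log\log x)$; it need not converge (take $E(t)=t/\log\log t$ to see the integral diverge). To land on $-R(\pi)\log\log x+M+o(1)$ you need the integral to converge, i.e.\ something like $E(t)=O\!\left(t/(\log t)^{1+\varepsilon}\right)$, which in turn requires a quantitative zero-free region and PNT-type estimate for $L_2(s,\pi)$ itself (equivalently for $L(s,\mathrm{sym}^2\pi)$ and $L(s,\wedge^2\pi)$), not just its meromorphic continuation to a neighbourhood of $\Re(s)\ge 1$. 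You gesture at this but do not establish it, and your invocation of Theorem~\ref{refinement} is incorrect: that theorem concerns $\psi(x,\pi)$ for $L(s,\pi)$, not the second-moment $L$-function, and in any case the paper's standing GRH assumption is for $L(s,\pi)$, not for $L_2(s,\pi)$. To close the gap you should either cite the relevant zero-free region for the symmetric and exterior square $L$-functions and carry the resulting error term through the partial summation, or, more in keeping with the paper's economy, simply quote Conrad's statement (which packages exactly this analysis) as the paper does.
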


\begin{proof}
This follows from the work of Conrad, see \cite[page 275]{Conrad2005}. 
\end{proof}

\begin{remark}
Lemma \ref{mertens} can be regarded as a generalization of Merten's estimate 
$$
\sum_{p \leq x} \frac{1}{p} = \log \log x+ M +o(1). 
$$
Indeed, if all the $\alpha_{i, p}=1$ so that $L_2(s, \pi)$ is the Riemann zeta function, we recover Merten's estimate. 
\end{remark}

Lemma \ref{mertens} is essentially the main reason why $\sqrt 2$ shows up in the Euler product asymptotics. 

\begin{corollary} \label{sqrt2}
Let $U_s(x)$ be as in Theorem \ref{mainthm}. 
We have that 
$$
\lim _{x \to \infty} U_{\frac{1}{2}}(x) = -R(\pi) \cdot  \log \sqrt 2= \nu(\pi) \cdot \log \sqrt 2
$$
\end{corollary}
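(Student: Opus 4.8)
The plan is to evaluate $U_{1/2}(x)$ directly by specializing $s = \frac12$ in the definition from \cref{mainthm} and then applying \cref{mertens} twice. Setting $s = \frac12$ gives $p^{2s} = p$, so
\[
U_{1/2}(x) = \frac{1}{2}\sum_{\sqrt x < p \leq x} \frac{\alpha_{1,p}^2 + \cdots + \alpha_{n,p}^2}{p}.
\]
First I would write this truncated sum as a difference of two sums over $p \leq x$ and $p \leq \sqrt x$, and invoke \cref{mertens} for each: the sum up to $x$ equals $-R(\pi)\log\log x + M + o(1)$, and the sum up to $\sqrt x$ equals $-R(\pi)\log\log\sqrt x + M + o(1)$. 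The constant $M$ cancels in the difference.

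Next, the elementary identity $\log\log\sqrt x = \log\!\big(\tfrac12 \log x\big) = \log\log x - \log 2$ shows that
\[
\log\log x - \log\log\sqrt x = \log 2,
\]
so the surviving main term of the difference is $-R(\pi)\log 2$, with everything else absorbed into $o(1)$ as $x \to \infty$. Dividing by $2$ yields $U_{1/2}(x) = -R(\pi)\log\sqrt 2 + o(1)$, and letting $x \to \infty$ gives the stated limit; the final equality $-R(\pi)\log\sqrt 2 = \nu(\pi)\log\sqrt 2$ is just the relation $\nu(\pi) = -R(\pi)$ recalled before the lemma.

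There is no real obstacle here — the only point requiring a modicum of care is making sure the two $o(1)$ error terms from the two applications of \cref{mertens} combine correctly (they do, since each tends to $0$), and that the use of \cref{mertens} at argument $\sqrt x$ is legitimate, which it is because $\sqrt x \to \infty$. So the entire argument is a short computation; I would present it in a few lines exactly as above.
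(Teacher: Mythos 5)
Your argument is exactly the paper's: write $U_{1/2}(x)$ as the difference of the Mertens-type sums up to $x$ and up to $\sqrt{x}$, apply \cref{mertens} to each, cancel the constant $M$, and use $\log\log x - \log\log\sqrt{x} = \log 2$ to produce $-R(\pi)\log\sqrt{2} + o(1)$. Correct, and essentially identical to the paper's proof.
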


\begin{proof}
We have that 
\begin{align*}
U_{ \frac{1}{2}}(x) &= \sum_{p \leq x} \frac{\alpha_{1, p}^2+\cdots+\alpha_{n, p}^2}{2p}- \sum_{p \leq \sqrt x} \frac{\alpha_{1, p}^2+\cdots+\alpha_{n, p}^2}{2p} \\
& =\left (- \frac{R(\pi)}{2} \log \log x+ M +o(1) \right)- \left (-\frac{R(\pi)}{2} \log \log \sqrt x+ M +o(1) \right) \\
&= -R(\pi) \cdot \log \sqrt 2 +o(1). 
\end{align*}
\end{proof}

We can now prove our first main theorem.

\begin{theorem} [Theorem \ref{Thm1}] \label{boundedness}
Let $\pi$ be an irreducible cuspidal automorphic representation of $\GL_n(\mathbb A_\mathbb Q)$ such that $L(s, \pi)$ is entire and let $m = \ord_{s = 1/2} L(s, \pi)$. 
Assume the Ramanujan--Petersson conjecture and the Riemann Hypothesis for $L(s, \pi)$. Then there exists a subset $S \subseteq \mathbb R_{\geq 2}$ of finite logarithmic measure such that for all $x \not \in S$, 
$$
(\log x)^m \cdot  \prod_{p \leq x}  \prod_{j=1}^n (1-\alpha_{j, p}p^{-\frac{1}{2} })^{-1}   \sim  \frac{\sqrt{2}^{ \nu (\pi) }}{e^{m \gamma} m!} \cdot L^{(m)} \left(\pi, \frac{1}{2}\right).
$$

\end{theorem}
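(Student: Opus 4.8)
The plan is to combine the asymptotic formula for partial Euler products in the open critical strip (Theorem \ref{mainthm}) with a limiting argument as $s_0 \to \tfrac{1}{2}$, using Theorem \ref{refinement} to control the zero-sum term $R_s(x)$ along a set of $x$ of full logarithmic density. First I would start from the identity of Theorem \ref{mainthm}, applied at a point $s_0 = \tfrac{1}{2} + \delta$ with $\delta > 0$ small:
\[
\prod_{p \leq x} \prod_{j=1}^n (1 - \alpha_{j,p} p^{-s_0})^{-1} = L(s_0, \pi)\, \exp\!\left( -m\,\Li(x^{1/2 - s_0}) - R_{s_0}(x) + U_{s_0}(x) + O\!\left(\tfrac{\log x}{x^{1/6}}\right) \right),
\]
and analyze each term as one lets $\delta \to 0^+$, taking care that $x$ ranges outside the exceptional set $S$ furnished by Theorem \ref{refinement}. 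The term $-m\,\Li(x^{1/2-s_0})$ contributes the main $(\log x)$-power: since $\Li(x^{-\delta}) = -\log(\delta \log x) - \gamma + o(1)$ as $\delta \log x \to \infty$, exponentiating $-m\,\Li(x^{-\delta})$ produces $(\delta \log x)^m e^{m\gamma}(1 + o(1))$, and the surviving $\delta$-dependence is designed to cancel against the $\delta^{-m}$ pole coming from $L(s_0, \pi) \sim L^{(m)}(\tfrac12,\pi)\,\delta^m/m!$. The term $U_{s_0}(x)$ is handled by Corollary \ref{sqrt2}: as $s_0 \to \tfrac12$ it tends to $\nu(\pi)\log\sqrt 2$, contributing the $\sqrt{2}^{\,\nu(\pi)}$ factor. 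The trivial-zero and higher-prime-power contributions are absorbed into the $O(\log x / x^{1/6})$ error, which $\to 0$.

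The delicate point is the zero-sum term $R_{s_0}(x) = \frac{1}{\log x}\sum_{\rho \neq 1/2} \frac{x^{\rho - s_0}}{\rho - s_0} + \frac{1}{\log x}\sum_{\rho\neq 1/2}\int_{s_0}^\infty \frac{x^{\rho - z}}{(\rho - z)^2}\,dz$: I need to show that this tends to $0$ as $x \to \infty$ (uniformly enough to commute with $\delta \to 0$) for $x$ outside a set of finite logarithmic measure. Here is where the plain GRH bound $\psi(x,\pi) = O(\sqrt x (\log x)^2)$ is insufficient — it would only give $R_s(x) = O((\log x)\cdot\text{something})$ near the center. Instead I would use partial summation to rewrite $\sum_\rho x^{\rho-s}/(\rho-s)$ in terms of $\psi(t,\pi) - (\text{main terms})$, and invoke Theorem \ref{refinement}: outside a set $S$ of finite logarithmic measure, $\psi(x,\pi) \ll \sqrt x (\log\log x)^2$, which yields $R_{1/2}(x) \ll (\log\log x)^2/\log x \to 0$. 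One must check that the exceptional set can be chosen uniformly in $\delta$ (it can, since the bound in Theorem \ref{refinement} is a statement about $\psi(x,\pi)$ itself, independent of $s$), and that the second (integral) piece of $R_s(x)$ is likewise controlled — this follows from the same estimate since the integrand decays and $|\rho - z|^{-2}$ is summable over zeros by the standard zero-density estimate $\sum_\rho (1 + \gamma^2)^{-1} < \infty$.

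The final assembly is then a careful bookkeeping of the limit $\delta \to 0$: writing $x \notin S$ and letting $\delta = \delta(x) \to 0$ slowly (e.g. choosing $\delta$ so that $\delta \log x \to \infty$ but $\delta \to 0$, which is legitimate since the formula of Theorem \ref{mainthm} holds for every fixed $s_0$ in the strip and all the error terms are uniform in $s_0$ by the Remark following it), one obtains
\[
\prod_{p\leq x}\prod_{j=1}^n (1 - \alpha_{j,p} p^{-1/2})^{-1} \sim \frac{L^{(m)}(\tfrac12,\pi)}{m!}\cdot \frac{(\delta\log x)^m}{\delta^m} \cdot \frac{1}{e^{m\gamma}} \cdot \sqrt{2}^{\,\nu(\pi)} \cdot (\log x)^{-m}\big|_{\text{after multiplying by }(\log x)^m},
\]
wait — more precisely, multiplying through by $(\log x)^m$ and simplifying $(\delta \log x)^m \cdot \delta^{-m} \cdot (\log x)^{-m} \cdot (\log x)^m = (\log x)^m$ does not quite close; the correct cancellation is that $L(s_0,\pi)\exp(-m\Li(x^{-\delta})) \to \frac{L^{(m)}(1/2,\pi)}{m!}\cdot\frac{(\log x)^m e^{m\gamma}}{1}$ after the $\delta^m$ from $L$ meets the $\delta^{-m}$ inside the exponential, leaving exactly $(\log x)^m e^{m\gamma}$, so that $(\log x)^m\prod_{p\le x}\cdots \to \frac{\sqrt 2^{\nu(\pi)}}{e^{m\gamma}m!}L^{(m)}(\tfrac12,\pi)$ as claimed. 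I expect the main obstacle to be making this $\delta \to 0$ interchange rigorous — i.e.\ justifying that the $o(1)$'s in the Li-asymptotic and in $U_{s_0}(x)$ are uniform enough in $\delta$ near $\tfrac12$, and that $R_{s_0}(x) \to 0$ holds with the exceptional set independent of $\delta$ — and this is precisely where Theorem \ref{refinement} (rather than bare GRH) is essential; the argument here parallels \cite[Theorem 3.4 and its corollaries]{Sheth2023}.
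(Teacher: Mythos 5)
Your overall strategy matches the paper's: apply Theorem~\ref{mainthm} at a point near $s=\tfrac12$, extract the $(\log x)^{-m}$ and $\sqrt 2^{\nu(\pi)}$ factors from $\Li$ and $U_s(x)$ respectively, and control the zero-sum $R_s(x)$ via Theorem~\ref{refinement}. However, the way you propose to move $s$ toward $\tfrac12$ is the wrong one, and this introduces real errors.

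The paper does not let $\delta\to 0$ separately from $x$ and then worry about an interchange of limits. It fixes the explicit choice $s = \tfrac12 + \tfrac1x$, so $\delta = 1/x$ and, crucially, $\delta\log x = \tfrac{\log x}{x}\to 0$. This is the opposite regime from the $\delta\log x\to\infty$ you propose, and the distinction is not cosmetic. First, the step where one replaces $\prod_{p\le x}(1-\alpha_{j,p}p^{-s})^{-1}$ by $\prod_{p\le x}(1-\alpha_{j,p}p^{-1/2})^{-1}$ requires the multiplicative error
$\prod_{p\le x}\bigl(1+O(\alpha_{j,p}\delta\log p/\sqrt p)\bigr)$
to be $1+o(1)$. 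Taking logarithms and using Ramanujan--Petersson plus $\sum_{p\le x}\tfrac{\log p}{\sqrt p}\ll\sqrt x$, the error is $O(\delta\sqrt x)$, which is $o(1)$ precisely because $\delta=1/x$; with any $\delta\gg 1/\sqrt x$ --- in particular any $\delta$ making $\delta\log x\to\infty$ --- this comparison fails outright. Second, your statement of the $\Li$ asymptotic is wrong in both sign and regime: as $y\to 0^+$ one has $\Li(y)\to 0$, so $\Li(x^{-\delta})\to 0$ when $\delta\log x\to\infty$, and it does not blow up like $-\log(\delta\log x)-\gamma$. The correct expansion used in the paper, valid as $\delta\log x\to 0$, is $\Li(x^{-\delta})=\gamma+\log(\delta\log x)+o(1)$, so that $\exp(-m\Li(x^{-\delta}))\sim e^{-m\gamma}(\delta\log x)^{-m}$; multiplying by $L(s,\pi)\sim \tfrac{L^{(m)}(1/2,\pi)}{m!}\delta^m$ cancels the $\delta^{\pm m}$ and leaves exactly $\tfrac{L^{(m)}(1/2,\pi)}{m!\,e^{m\gamma}}(\log x)^{-m}$, which is what produces the stated result after multiplying by $(\log x)^m$. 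Your ``corrected'' bookkeeping near the end is still inconsistent (you assert the exponential leaves $(\log x)^m e^{m\gamma}$, which would make the right-hand side diverge), even though you write down the correct final answer. Once you replace the floating $\delta\to0$ limit by the explicit choice $s=\tfrac12+\tfrac1x$ and fix the $\Li$ expansion, the remaining ingredients --- $U_{1/2+1/x}(x)\to\nu(\pi)\log\sqrt 2$ from Corollary~\ref{sqrt2}, and $R_{1/2+1/x}(x)=o(1)$ outside a set of finite logarithmic measure via Theorem~\ref{refinement} together with the convergence of $\sum_\rho|\rho|^{-2}$ --- are essentially as you describe.
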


\begin{proof}
We follow the method of \cite[Theorem 4.2]{Sheth2023}.  When $s=\frac{1}{2}+\frac{1}{x}$, the left-hand side of Theorem \ref{mainthm} equals 
\begin{align*}
\prod_{p \leq x} \prod_{j=1}^n (1-\alpha_{j, p} p^{-s})^{-1} &= \prod_{p \leq x} \prod_{j=1}^n (1-\alpha_{j, p} p^{-\frac{1}{2}-\frac{1}{x}})^{-1} \\
&= \prod_{p \leq x} \prod_{j=1}^n \left (1-\alpha_{j, p} p^{-\frac{1}{2} } \left (1+ O \left ( \frac{\log p}{x} \right ) \right)  \right)^{-1}  \\
&= \prod_{p \leq x} \prod_{j=1}^n (1-\alpha_{j, p} p^{-\frac{1}{2}})^{-1} \cdot \prod_ p \prod_{j=1}^n \left(1 + O \left ( \frac{\alpha_{j, p} \cdot \log p }{ \sqrt p  x}  \right) \right), 
\end{align*}
where we used the fact that 
$$
(1+a+O(f(x)))^{-1}= (1+a)^{-1} (1+O(f(x))) \textrm{ if } f(x)=o(1) \textrm{ and } a  \textrm{ is sufficiently small.  }
$$
Now 
$$
\prod_ {p \leq x} \prod_{j=1}^n \left(1 + O \left ( \frac{\alpha_{j, p} \cdot \log p }{ \sqrt p  x}  \right) \right) = 1+O \left (  \sum_ {p \leq x} \frac{|\alpha_{j, p}| \cdot \log p}{\sqrt p x} \right )  =1+o(1), 
$$
where we used the Ramanujan--Petersson Conjecture and the estimate $\sum_{p \leq x} \frac{1}{\sqrt p} \ll \frac{\sqrt x}{\log x} $. 
Thus, when $s=\frac{1}{2}+\frac{1}{x}$, 
\begin{equation} \label{lhs}
\prod_{p \leq x} \prod_{j=1}^n (1-\alpha_{j, p} p^{-s})^{-1} \sim  \prod_{p \leq x}  \prod_{j=1}^n (1-\alpha_{j, p}p^{-\frac{1}{2} })^{-1}. 
\end{equation}

We now estimate the right-hand side of Theorem \ref{boundedness}.  Write 
$$
L(s, \pi)= a_m \left( s-\frac{1}{2} \right)^m+ a_{m+1}  \left( s-\frac{1}{2} \right) ^{m+1}+ \cdots 
$$
so that when $s=\frac{1}{2}+\frac{1}{x}$,  $L(s, \pi) \sim a_m \cdot \frac{1}{x^m}$ as $x \to \infty$. 

To estimate the contribution coming from the term $\Li(x^{\frac{1}{2}-s})$, we use the classical fact (see for instance \cite[pp.425]{Finch2003} or  \cite[Equation (2.2.5)]{Hardy1940}) that
$$
\Li(x) = \gamma + \log |\log x| +\sum_{n=1}^{\infty} \frac{ (\log x)^n}{n! \cdot n} \textrm{ for  all } x \in \mathbb  R_{>0} \setminus \{1\} .
$$
Applying this when $s=\frac{1}{2}+\frac{1}{x}$ yields that 
$$
\Li(x^{\frac{1}{2}-s}) = \gamma+ \log \left (\frac{\log x}{x} \right )+ o(1). 
$$

To estimate the contribution coming from the term $U_s(x)$ we note that $U_{\frac{1}{2} +\frac{1}{x}}(x) \to U_{\frac{1}{2}}(x)$ as $x \to \infty$ since, by a similar argument as above, we have that 
\begin{align*}
\sum_{{\substack{\sqrt x < p \leq x}}} \frac{(\alpha_{1,p}^2+ \cdots + \alpha_{n,p} ^2) }{2p^{2(\frac{1}{2}+\frac{1}{x}) }}&= \sum_{{\substack{\sqrt x < p \leq x }}} \frac{(\alpha_{1,p}^2+ \cdots + \alpha_{n,p} ^2) }{2p} \cdot \left(1+ O\left (\frac{\log p}{x} \right) \right) \\
&= \sum_{{\substack{\sqrt x < p \leq x}}} \frac{(\alpha_{1,p}^2+\cdots \alpha_{n, p}^2) }{2p}+ O\left (\frac{1}{x} \sum_{\sqrt x < p \leq x} \frac{ \log p}{p} \right)= U_{\frac{1}{2}}(x)+ o(1). 
\end{align*}
Using Corollary \ref{sqrt2}, we conclude that when $s=\frac{1}{2} +\frac{1}{x}$, $U_s(x) \to \nu(\pi) \cdot \log \sqrt 2$ as $x \to \infty$. 

To estimate the contribution coming from the term $$R_s(x)= \frac{1}{\log x}\sum \limits_{ \rho \neq \frac{1}{2} } \frac{x^{\rho-s}}{ \rho-s}+\frac{1}{\log x} \sum \limits_{\rho \neq \frac{1}{2}} \int_{s}^{\infty} \frac{x^{\rho-z}}{(\rho-z)^2}dz$$  when $s=\frac{1}{2}+\frac{1}{x}$,  we begin by noting that 
$$
\sum_{\rho \neq \frac{1}{2}} \frac{1}{\rho-s}-\sum_{\rho \neq \frac{1}{2} } \frac{1}{\rho} = \sum_{\rho \neq 1} \frac{s}{ (\rho-s) \rho}
$$
and that $$\sum_{\rho \neq \frac{1}{2} } \frac{s}{ (\rho-s) \rho} \ll  \sum_{\rho} \frac{1}{|\rho|^2} < \infty,
$$

where the fact that above sum converges follows, for instance, from the discussion in \cite[Remark 1]{Devin2020}.  Thus, 
\begin{equation*}
\sum_{\rho \neq \frac{1}{2}} \frac{1}{\rho-s} =\sum_{\rho} \frac{1}{\rho}+O(1)
\end{equation*}
and, since we are assuming the Riemann Hypothesis for $L(s, \pi)$, 
$$
\sum_{\rho \neq \frac{1}{2}} \frac{x^{\rho-s}}{\rho-s}=\frac{1}{x^s} \sum_{\rho} \frac{x^\rho}{\rho}+O(x^{\rho-s}) \ll \frac{1}{\sqrt x} \sum_{\rho} \frac{x^\rho}{\rho} +O(1). 
$$
Using the Riemann von-Mangoldt explicit formula for $L(s, \pi)$ (see  \cite[Theorem 3.1]{Qu2007})
$$
\psi(x, \pi) =- \sum_{\rho} \frac{x^{\rho}}{\rho}+O(x^{\frac{1}{2}-\frac{1}{n^2+1} } \log x), 
$$
it follows that 
$$
\sum_{\rho \neq \frac{1}{2}} \frac{x^{\rho-s}}{\rho-s} \ll \frac{1}{\sqrt x} \psi(x, \pi)+O(1). 
$$
By Theorem \ref{refinement}, we conclude that there exists a set $S$ of finite logarithmic measure such that for all $x \not \in S$, 
$$
\sum_{\rho \neq \frac{1}{2}} \frac{x^{\rho-s}}{\rho-s} \ll \frac{1}{\sqrt x} O( \sqrt x (\log \log x)^2)+O(1) 
$$
and so for all $x \not \in S$, 
$$
\frac{1}{\log x} \sum_{\rho \neq \frac{1}{2}} \frac{x^{\rho-s}}{\rho-s}  =o(1). 
$$
To estimate the second quantity in the definition of $R_s(x)$, we again use the fact that $\sum \limits_{\rho} \frac{1}{|\rho|^2}$ converges to conclude that 
\begin{align*}
\frac{1}{\log x} \sum_{\rho \neq \frac{1}{2}} \int_{s}^{\infty} \frac{x^{\rho-z}}{(\rho-z)^2}dz &= O \left(\frac{1}{\log x} \cdot \int_ {s}^{\infty}  x^{\frac{1}{2}-\Re(z) } d|z| \right) \\
&= O \left ( \frac{x^{\frac{1}{2} -\textrm{Re}(s)}}{\log^2 x}  \right) 
=o(1).  \\ 
\end{align*}

%The final term $ O \left( \frac{x^{1-2\Re(s)}-x^{ \frac{1-\Re(s)}{2} }}{(1-2\Re(s)) \log x} \right ) $ equals $O(1)$ when $s=\frac{1}{2}+\frac{1}{x}$, as can be seen by Taylor expanding the numerator. 
Thus, in summary, when $s=\frac{1}{2} +\frac{1}{x}$, we conclude that for all $x \not \in S$  we have that 
\begin{align*}
 & L(s, \pi)   \exp \left (-m \cdot \Li(x^{\frac{1}{2}-s}) -R_s(x)+U_s(x) + O \left (\frac{\log x}{x^{1/6}} \right) \right) 
  \\  &= L(s, \pi) \exp \left (-m \gamma -m \log \left (\frac{\log x}{x} \right )+\nu(\pi) \cdot \log \sqrt 2 + o(1) \right ) \\
& \sim  \frac{a_m}{x^m} \cdot \exp \left (-m \gamma -m\log \left (\frac{\log x}{x} \right)+ \nu (\pi) \cdot \log \sqrt 2  \right)=   \frac{1}{ (\log x)^m } \cdot  \frac{\sqrt{2}^{ \nu (\pi) }}{e^{m \gamma} m!} \cdot  L^{ (m) } \left( \frac{1}{2}, \pi \right)   \\
\end{align*}
Combining this with Equation \eqref{lhs} proves the theorem.  \qedhere

\end{proof}

\section{Applications to Chebyshev's Bias} \label{Section4}

In this section, we use Theorem \ref{boundedness} to obtain results towards Chebyshev's bias in the framework of \cite{AokiKoyama2023}.  We recall the following definition. 

\begin{definition}
Let $(c_p)_p \subseteq \R$ be a sequence over primes $p$ such that
\begin{equation*}
\lim_{x \to \infty} \frac{\#\{p \mid c_p > 0,  p \leq x \}}{\#\{p \mid c_p < 0,  p \leq x \}} = 1.
\end{equation*}
We say that $(c_p)_p$ has a \textit{Chebyshev bias towards being positive} if there exists a positive constant C
\begin{equation*}
\sum_{p \leq x} \frac{c_p}{\sqrt p} \sim  C \log \log x. 
\end{equation*}
On the other hand, we say that $c_p$ is \textit{unbiased} if
\begin{equation*}
\sum_{p \leq x} \frac{c_p}{\sqrt p } = O(1).
\end{equation*}
\end{definition}

%In this section, we let $\pi$ be an irreducible cuspidal automorphic representation with $L$-function
%$$
%L(s, \pi) = \prod_p \prod_{j=1}^n (1-\alpha_{j, p} p^{-s})^{-1}. 
%We assume that the second moment $L$-function $L_2(s, \pi)$ extends to a holomorphic function on the line $\textrm{Re}(s)=1$ and we let $R_0=\ord_{s=1} L_2(s, \pi)$. 

As mentioned in the introduction, the sum appearing in this definition is closely related to partial Euler products of $L$-functions at the centre of the critical strip. Indeed, using Theorem \ref{boundedness}, we obtain the following asymptotic.

\begin{theorem}[Chebyshev's bias for Satake parameters] \label{Satake}
Let $\pi$ be an irreducible cuspidal automorphic representation of $\GL_n(\mathbb A_\mathbb Q)$ such that $L(s, \pi)$ is entire and let $m =\ord_{s=1/2} L \left(s, \pi \right)$. Assume the Riemann Hypothesis and the Ramanujan-Petersson Conjecture for $L(s, \pi)$. Then there exists a constant $c_\pi$ such that 
$$
\sum_{p \leq x} \frac{\alpha_{1, p}+ \cdots +\alpha_{n, p}}{\sqrt p}= \left( \frac{R(\pi)}{2}-m \right) \log \log x+ c_\pi+ o(1)
$$
for all $x$ outside a set of finite logarithmic measure, where $R(\pi)=\ord \limits _{s=1} L_2(s, \pi)$. 
\end{theorem}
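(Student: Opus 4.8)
The plan is to take logarithms in Theorem~\ref{boundedness} and isolate the $k=1$ term of the resulting prime-power expansion. Expanding each local factor through the logarithm series — legitimate since $|\alpha_{j,p}p^{-1/2}| < 1$ under the Ramanujan--Petersson Conjecture — one obtains
\begin{equation*}
\log \prod_{p \leq x} \prod_{j=1}^n (1 - \alpha_{j,p} p^{-1/2})^{-1} = \sum_{p \leq x} \frac{a_\pi(p)}{\sqrt p} + \sum_{p \leq x} \frac{a_\pi(p^2)}{2p} + \sum_{p \leq x} \sum_{k \geq 3} \frac{a_\pi(p^k)}{k\, p^{k/2}},
\end{equation*}
where $a_\pi(p^k) = \alpha_{1,p}^k + \cdots + \alpha_{n,p}^k$; the first sum on the right is exactly $\sum_{p\leq x}\frac{\alpha_{1,p}+\cdots+\alpha_{n,p}}{\sqrt p}$, the quantity we wish to estimate.

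Next I would treat the remaining two sums. The $k=2$ piece equals $\tfrac12\sum_{p\leq x}\frac{\alpha_{1,p}^2+\cdots+\alpha_{n,p}^2}{p}$, which by Lemma~\ref{mertens} is $-\tfrac{R(\pi)}{2}\log\log x + \tfrac{M}{2} + o(1)$. The $k\geq 3$ sum is absolutely convergent: under Ramanujan--Petersson $|a_\pi(p^k)| \leq n$, so $\sum_p\sum_{k\geq 3}\frac{|a_\pi(p^k)|}{k\,p^{k/2}} \ll \sum_p p^{-3/2} < \infty$, whence $\sum_{p\leq x}\sum_{k\geq 3}\frac{a_\pi(p^k)}{k\,p^{k/2}} = B_\pi + o(1)$ for a constant $B_\pi$. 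Taking logarithms in Theorem~\ref{boundedness} (whose right-hand side is a fixed nonzero number, with the branch of the logarithm dictated by the proof of that theorem) gives, for all $x$ outside a set $S$ of finite logarithmic measure,
\begin{equation*}
m\log\log x + \log \prod_{p \leq x} \prod_{j=1}^n (1 - \alpha_{j,p} p^{-1/2})^{-1} = \log\!\left( \frac{\sqrt{2}^{\,\nu(\pi)}}{e^{m\gamma}m!}\, L^{(m)}\!\left(\tfrac12,\pi\right) \right) + o(1).
\end{equation*}

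Substituting the $k=2$ and $k\geq 3$ evaluations into this identity and moving the $k=2$ sum to the right, the logarithmic terms combine as $-m\log\log x + \tfrac{R(\pi)}{2}\log\log x$ while all other contributions are constants plus $o(1)$, yielding
\begin{equation*}
\sum_{p \leq x} \frac{\alpha_{1,p}+\cdots+\alpha_{n,p}}{\sqrt p} = \left(\frac{R(\pi)}{2} - m\right)\log\log x + c_\pi + o(1) \qquad (x \notin S),
\end{equation*}
with $c_\pi = \log\!\big(\tfrac{\sqrt{2}^{\nu(\pi)}}{e^{m\gamma}m!}L^{(m)}(\tfrac12,\pi)\big) - \tfrac{M}{2} - B_\pi$; recalling $\nu(\pi) = -R(\pi)$, this is the asserted statement. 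I do not anticipate a serious obstacle: the analytic content already sits inside Theorem~\ref{boundedness} (hence Theorem~\ref{refinement}) and Lemma~\ref{mertens}, and the only things requiring attention are that the logarithmic expansion of the partial Euler product converges termwise — precisely where Ramanujan--Petersson is invoked — and the bookkeeping of the exceptional set, which is simply inherited from Theorem~\ref{boundedness} since Lemma~\ref{mertens} and the $k\geq 3$ bound are valid for all $x$.
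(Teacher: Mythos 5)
Your proposal is correct and follows essentially the same route as the paper's proof: take logarithms in Theorem~\ref{boundedness}, split the prime-power expansion of $\log\prod_{p\leq x}\prod_j(1-\alpha_{j,p}p^{-1/2})^{-1}$ into the $k=1$, $k=2$, and $k\geq 3$ pieces, handle the $k=2$ piece via Lemma~\ref{mertens}, and note that the $k\geq 3$ tail converges. The only difference is cosmetic: you spell out the Ramanujan--Petersson bound $|a_\pi(p^k)|\leq n$ and the branch-of-logarithm point, whereas the paper simply asserts convergence of the $k\geq 3$ sum.
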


\begin{proof}
By Theorem \ref{boundedness}, we have that 
$$
(\log x)^m \cdot   \prod_{p \leq x}  \prod_{j=1}^n (1-\alpha_{j, p} p^{-\frac{1}{2}})^{-1} =  \frac{\sqrt{2}^{ \nu (\pi) }}{e^{m \gamma} m!} \cdot L^{(m)} \left(\frac{1}{2}, \pi \right)+o(1)
$$
for all $x$ outside a set $S$ of finite logarithmic measure. Taking logarithms yields that 
$$
 m \log \log x+  \sum_{p \leq x} \frac{\alpha_{1, p}+ \cdots +\alpha_{n, p}}{\sqrt p} + \sum_{p \leq x} \frac{\alpha_{1, p}^2+\cdots+\alpha_{n, p}^2}{2p}+ \sum_{p \leq x} \sum_{k \geq 3} \frac{\alpha_{1, p}^k+ \cdots +\alpha_{n, p}^k}{k p^{k/2 }} =  C+ o(1)
 $$
for all $x \not \in S$, where $C=\log \left( \frac{\sqrt{2}^{ \nu (\pi) }}{e^{m \gamma} m!} \cdot L^{(m)} \left(\frac{1}{2}, \pi \right) \right) $. 
Using Lemma \ref{mertens} and the fact that the last term on the left-hand side above converges, we get as desired that  
$$
\sum_{p \leq x} \frac{\alpha_{1, p}+ \cdots +\alpha_{n, p}}{\sqrt p}= \left( \frac{R(\pi)}{2}-m \right) \log \log x+ c_\pi+ o(1)
$$
for some constant $c_\pi$. 
\end{proof}

We now proceed to record a number of special cases of the above theorem. 

\subsection{The mod $4$ prime number race}

\begin{corollary} \label{primenumberrace}
Let $\chi_4$ denote the non-trivial Dirichlet character modulo 4.
Assume the Riemann Hypothesis for $L(\chi_4, s)$. 
Then there exists a constant $c$ such that 
$$
\sum_{p \leq x} \frac{\chi_4(p)}{\sqrt p}= -\frac{1}{2} \log \log x+ c+ o(1)
$$
for all $x$ outside a set of finite logarithmic measure.  In particular, 
$$
\pi_{\frac{1}{2}}(x; 4, 3)- \pi_{\frac{1}{2}}(x; 4, 1)=  \frac{1}{2} \log \log x+ c+ o(1)
$$
for all $x$ outside a set of finite logarithmic measure. 
\end{corollary}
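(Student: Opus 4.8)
The plan is to deduce this as a special case of Theorem \ref{Satake} applied to the automorphic representation attached to $\chi_4$. First I would recall that, via class field theory, the non-trivial Dirichlet character $\chi_4$ modulo $4$ corresponds to an irreducible cuspidal automorphic representation $\pi$ of $\GL_1(\A_\Q)$ with $L(s,\pi)=L(s,\chi_4)$; since $\chi_4$ is non-principal, $L(s,\chi_4)$ is entire, the single Satake parameter at each odd prime is $\alpha_{1,p}=\chi_4(p)$, which has absolute value $1$, and at the ramified prime $p=2$ we use the convention $\alpha_{1,2}=0$. Thus the Ramanujan--Petersson conjecture holds trivially for $\pi$, and the hypotheses of Theorem \ref{Satake} are met once we assume the Riemann Hypothesis for $L(s,\chi_4)$.

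Next I would pin down the two integer invariants occurring in Theorem \ref{Satake}. For the order of vanishing at the centre $m=\ord_{s=1/2}L(s,\chi_4)$: the Dirichlet series $\sum_{k\ge 0}(-1)^k(2k+1)^{-s}$ has bounded partial sums of coefficients, hence converges for $\Re(s)>0$ and represents $L(s,\chi_4)$ there, so $L(1/2,\chi_4)=\sum_{k\ge 0}(-1)^k(2k+1)^{-1/2}$ is an alternating series with strictly decreasing terms, giving $L(1/2,\chi_4)>1-3^{-1/2}>0$ and therefore $m=0$. For the second-moment invariant, $L_2(s,\pi)=\prod_p(1-\chi_4(p)^2p^{-s})^{-1}=L(s,\chi_4^2)$; since $\chi_4$ is quadratic, $\chi_4^2$ is the principal character modulo $4$, so $L_2(s,\pi)=\zeta(s)(1-2^{-s})$, which has a simple pole at $s=1$. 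Hence $R(\pi)=\ord_{s=1}L_2(s,\pi)=-1$ (equivalently $\nu(\pi)=1$, consistent with Example \ref{Dirichlet}).

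Substituting $m=0$ and $R(\pi)=-1$ into Theorem \ref{Satake} produces a constant $c$ with
$$\sum_{p\le x}\frac{\chi_4(p)}{\sqrt p}=\Big(\tfrac{-1}{2}-0\Big)\log\log x+c+o(1)=-\tfrac{1}{2}\log\log x+c+o(1)$$
for all $x$ outside a set of finite logarithmic measure, which is the first assertion. For the second, I would observe that $\chi_4(p)=1$ for $p\equiv 1\pmod 4$, $\chi_4(p)=-1$ for $p\equiv 3\pmod 4$, and $\chi_4(2)=0$, so that $\sum_{p\le x}\chi_4(p)p^{-1/2}=\pi_{1/2}(x;4,1)-\pi_{1/2}(x;4,3)$; negating the displayed asymptotic (and renaming the constant accordingly) gives the stated formula for $\pi_{1/2}(x;4,3)-\pi_{1/2}(x;4,1)$.

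Since all the analytic work is absorbed into Theorem \ref{Satake}, the only point requiring genuine care is the verification that $m=0$, i.e.\ that $L(1/2,\chi_4)\neq 0$; for $\chi_4$ this is elementary via the alternating-series representation above, so no deep central non-vanishing result is needed, and that is where I expect the (mild) subtlety of the argument to lie.
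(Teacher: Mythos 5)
Your proposal is correct and follows the same route as the paper: both deduce the result from Theorem \ref{Satake} after verifying $m=\ord_{s=1/2}L(s,\chi_4)=0$ and $R(\chi_4)=\ord_{s=1}L_2(s,\chi_4)=-1$. The only difference is that the paper cites $L(1/2,\chi_4)\neq 0$ as well known and points to Example \ref{Dirichlet} for $R(\chi_4)=-1$, whereas you spell out the elementary alternating-series argument and the computation $L_2(s,\chi_4)=\zeta(s)(1-2^{-s})$ explicitly.
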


\begin{proof}
It is well known that $L(\chi_{4}, \frac{1}{2} ) \neq 0$ and by Example \ref{Dirichlet}, $R(\chi_4)=-1$. The result thus follows from Theorem \ref{Satake}. \qedhere

\begin{comment}Thus, by Theorem \ref{boundedness}, there exist positive constants $C_1$ and $C_2$ such that 
$$
C_1 \leq \prod_{p \leq x} (1-\chi_{4}(p)p^{-\frac{1}{2}})^{-1} \leq C_2
$$
for all $x$ outside a set of finite logarithmic measure. 
Taking logarithms yields that 
\begin{equation} \label{primenumberrace}
\log C_1 \leq \sum_{p \leq x} \frac{\chi_4(p)}{\sqrt p}+ \sum_{p \leq x} \frac{\chi_4(p)^2}{2p}+ \sum_{p \leq x} \sum_{k \geq 3} \frac{\chi_4(p)^k}{k p^{k/2}} \leq \log C_2.  
\end{equation}
We note that 
\begin{itemize}

\item $\sum_{p \leq x} \frac{\chi_4(p)}{\sqrt p}=  - (\pi_{\frac{1}{2}}(x; 4, 3)-\pi_{\frac{1}{2}}(x; 4, 1))$

\item  $\sum_{p \leq x} \frac{\chi_4(p)^2}{2p}=\frac{1}{2} \log \log x+ O(1)$ by Merten's theorem. 

\item $
\sum_{k \geq 3} \frac{\chi_4(p)^k}{k p^{k/2}} \leq \sum_{n=1}^{\infty} \frac{1}{n^{3/2}}<\zeta(\frac{3}{2})<\infty$. 

\end{itemize}

Inserting these estimates into Equation \eqref{primenumberrace} proves the theorem. 
\end{comment}
\end{proof}

\subsection{Chebyshev's bias for Ramanujan's $\tau$ function}

We consider the $\Delta$ function defined by 
$$
\Delta(q)=q\prod_{n=1}^{\infty} (1-q^n)^{24}=\sum_{n=1}^{\infty} \tau(n) q^n
$$
where $q= e^{2 \pi i z}$ and $z \in \mathbb H= \{z \in \mathbb C: \Im(z)>0\}$. 
The $L$-function attached to $\Delta$ is defined to be 
$$
L(s, \Delta)=\sum_{n=1}^{\infty} \frac{\tau(n)}{n^s}=\prod_p (1-\tau(p)p^{-s
}+p^{11-2s})^{-1}  \hspace{5mm} \text{for } \Re(s)>\frac{13}{2} 
$$
and has an analytic continuation to the entire complex plane. 

\begin{comment}
By the work of Deligne, $|\tau(p)| \leq 2 p^{11/2}$, so we can write $\tau(p)=2p^{11/2} \cos (\theta_p)$ for a unique $\theta_p \in [0, \pi]$. An elementary calculation shows that we have 
$$
L \left(s+\frac{11}{2}, \Delta \right)=\prod_p \det(1-M(p)p^{-s})^{-1}, 
$$
where 
$$
M(p)= \begin{pmatrix}
    e^{i \theta_p} & 0 \\ 0 & e^{-i \theta_p}
    \end{pmatrix}. 
$$
\end{comment}

\begin{corollary} \label{tauasymptotic} 
Assume the Riemann Hypothesis for $L(s, \Delta)$.  Then there exists a constant $c$ such that 
$$
\sum_{p \leq x} \frac{ \tau(p) }{p^6} =  \frac{1}{2} \log \log x+ c+ o(1) 
$$
for all $x$ outside a set of finite logarithmic measure. 
\end{corollary}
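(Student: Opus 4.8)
The plan is to recognize the sum $\sum_{p\le x}\tau(p)p^{-6}$ as a sum of normalized Satake parameters and apply Theorem \ref{Satake}. Let $\pi$ be the cuspidal automorphic representation of $\GL_2(\mathbb{A}_\mathbb{Q})$ attached to the newform $\Delta$, normalized so that $L(s,\pi) = L\!\left(s+\tfrac{11}{2},\Delta\right)$; this is entire since $L(s,\Delta)$ is, and its centre $s=\tfrac12$ corresponds to the classical centre $s=6$. By Deligne's bound $|\tau(p)|\le 2p^{11/2}$ we may write $\tau(p) = 2p^{11/2}\cos\theta_p$ for a unique $\theta_p\in[0,\pi]$, so the Satake parameters of $\pi$ at $p$ are $\alpha_{1,p}=e^{i\theta_p}$, $\alpha_{2,p}=e^{-i\theta_p}$. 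In particular $|\alpha_{j,p}|=1$, so the Ramanujan--Petersson Conjecture holds for $\pi$; also $\alpha_{1,p}+\alpha_{2,p}=\tau(p)p^{-11/2}$, and RH for $L(s,\Delta)$ is equivalent to RH for $L(s,\pi)$. Hence all hypotheses of Theorem \ref{Satake} are met, and the basic identity to record is
\[
\sum_{p\le x}\frac{\alpha_{1,p}+\alpha_{2,p}}{\sqrt p} \;=\; \sum_{p\le x}\frac{\tau(p)p^{-11/2}}{\sqrt p} \;=\; \sum_{p\le x}\frac{\tau(p)}{p^{6}}.
\]

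It remains to compute the two invariants feeding into Theorem \ref{Satake}: $m:=\ord_{s=1/2}L(s,\pi)$ and $R(\pi)=\ord_{s=1}L_2(s,\pi)$. For the first, $m=\ord_{s=6}L(s,\Delta)$, and since the central value $L(\Delta,6)$ is known to be nonzero, $m=0$. For the second, I would use the factorization $L(s,\pi\times\pi)=L(s,\mathrm{sym}^{2}\pi)\,L(s,\wedge^{2}\pi)$: the central character of $\pi$ is trivial (trivial nebentypus), so $\wedge^{2}\pi$ is the trivial representation and $L(s,\wedge^{2}\pi)=\zeta(s)$, whence $L_2(s,\pi)=L(s,\mathrm{sym}^{2}\pi)/\zeta(s)$. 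Because the symmetric square $L$-function of a holomorphic newform is entire (Shimura) and non-vanishing on the line $\Re(s)=1$ (Rankin--Selberg theory / Jacquet--Shalika), while $\zeta(s)$ has a simple pole at $s=1$, we obtain $R(\pi)=\ord_{s=1}L_2(s,\pi)=1$; equivalently $\nu(\pi)=m(\mathrm{sym}^{2}\pi)-m(\wedge^{2}\pi)=0-1=-1$ and $R(\pi)=-\nu(\pi)=1$.

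Plugging $m=0$ and $R(\pi)=1$ into Theorem \ref{Satake} gives
\[
\sum_{p\le x}\frac{\tau(p)}{p^{6}} \;=\; \left(\frac{R(\pi)}{2}-m\right)\log\log x + c + o(1) \;=\; \frac12\log\log x + c + o(1)
\]
for all $x$ outside a set of finite logarithmic measure, which is the assertion; moreover, since $\tau(p)p^{-11/2}$ changes sign equally often by the Sato--Tate law for $\Delta$, this exhibits $\tau(p)p^{-11/2}$ as having a Chebyshev bias towards being positive in the sense of Definition \ref{Chebyshev}. The only non-formal inputs are the two classical facts used above — the non-vanishing of $L(\Delta,6)$ (forcing $m=0$) and the analytic behaviour of $L(s,\mathrm{sym}^{2}\pi)$ near $s=1$ (pinning down $R(\pi)=1$); everything else is bookkeeping with the shift $s\mapsto s+\tfrac{11}{2}$. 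I expect the step most prone to error to be the determination of $R(\pi)$, in particular checking that the sign conventions in the definitions of $L_2(s,\pi)$, $R(\pi)$ and $\nu(\pi)$ are applied consistently with Example \ref{Dirichlet} so that one indeed lands on $+\tfrac12\log\log x$ rather than $-\tfrac12\log\log x$.
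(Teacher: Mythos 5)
Your proposal is correct and follows essentially the same route as the paper: identify the normalized Satake parameters $e^{\pm i\theta_p}$ via Deligne's bound, note $m=0$ from the non-vanishing of the central value, take $R(\pi_\Delta)=1$, and apply Theorem \ref{Satake}. The only difference is that you derive $R(\pi_\Delta)=1$ from the factorization $L_2(s,\pi)=L(s,\mathrm{sym}^2\pi)/\zeta(s)$ together with the holomorphy and non-vanishing of the symmetric square at $s=1$, whereas the paper simply cites this fact from the literature; your derivation is sound and slightly more self-contained.
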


\begin{proof}
By the work of Deligne \cite{Deligne1974} we have that $|\tau(p)| \leq 2 p^{11/2}$, so we can write $\tau(p)=2p^{11/2} \cos (\theta_p)$ for a unique $\theta_p \in [0, \pi]$. Let $\pi_\Delta$ denote the automorphic representation corresponding to $\pi$; then we have that 
$
\displaystyle{L(s, \pi_\Delta)= L \left(s+\frac{11}{2}, \Delta \right)}. 
$
An elementary calculation shows that we have 
$$
L \left(s+\frac{11}{2}, \Delta \right)=\prod_p (1-e^{i\theta_p} p^{-s} )^{-1}(1-e^{-i\theta_p} p^{-s})^{-1}. 
$$
It is known that $L \left(s, \pi_\Delta \right) \neq 0$ and $R(\pi_\Delta)=1$ (see \cite{KK22}). Thus, the asymptotic follows from Theorem \ref{Satake} since 
$$
\sum_{p \leq x} \frac{e^{i \theta_p}+e^{-i \theta_p}}{\sqrt p}= \sum_{ p \leq x} \frac{2 \cos (\theta_p) }{\sqrt p}= \sum_{p \leq x} \frac{ \tau(p) }{p^6}.  
$$

\begin{comment}
By Theorem \ref{boundedness}, there exist postive constants $C_1$ and $C_2$ such that 
$$
C_1 \leq  \prod_{p \leq x} \det(1-M(p) p^{-\frac{1}{2}})^{-1} \leq C_2
$$
for all $x$ outside a set of finite logarithmic measure.  Taking logarithms yields that 
\begin{equation} \label{tau}
\log C_1 \leq  \sum_{p \leq x} \frac{\Tr(M(p))}{\sqrt p}+ \sum_{p \leq x} \frac{\Tr(M(p)^2)}{2p}+ \sum_{p \leq x}\sum_{k \geq 3}  \frac{\Tr(M(p)^k)}{kp^{k/2}} \leq \log C_2. 
\end{equation}
We have that 
$$
 \sum_{p \leq x} \frac{\Tr(M(p))}{\sqrt p}=\sum_{p \leq x} \frac{2 \cos \theta_p}{\sqrt p}=\sum_{p \leq x} \frac{\tau(p)}{p^6}, 
$$
$$
\sum_{p \leq x} \frac{\Tr(M(p)^2)}{2p} \sim  -\frac{1}{2} \log \log x
$$
and 
$$
\sum_{p \leq x}\sum_{k \geq 3}  \frac{\Tr(M(p)^k)}{kp^{k/2}}= O(1). 
$$
Inserting these estimates into Equation \eqref{tau} proves the theorem. 
\end{comment}
\end{proof}

\subsection{Chebyshev's bias for Frobenius traces of elliptic curves}

Let $E/\mathbb Q$ be an elliptic curve of conductor $N_E$ with Frobenius trace $a_p$ for each prime $p$ and rank $\textrm{rk}(E)$.

\begin{corollary}  \label{frobtrace}
Assume the Riemann Hypothesis for $L(E, s)$ and the Birch and Swinnerton-Dyer conjecture. Then there exists a constant $c$ depending on $E$ such that 
$$
\sum_{p \leq x} \frac{a_p}{p} =   \left( \frac{1}{2}- \rk(E) \right) \log \log x + c+ o(1) 
$$
for all $x$ outside a set of finite logarithmic measure. 
\end{corollary}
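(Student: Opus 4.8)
The plan is to apply Theorem~\ref{Satake} to the cuspidal automorphic representation $\pi_E$ of $\GL_2(\mathbb A_\mathbb Q)$ attached to $E$ by the modularity theorem of Wiles~\cite{Wiles1995} and Breuil--Conrad--Diamond--Taylor~\cite{BreuilConradDiamondTaylor2001}, normalised so that $L(s,\pi_E)=L(E,s+\tfrac12)$. With this normalisation $L(s,\pi_E)$ is entire, and the Riemann Hypothesis for $L(s,\pi_E)$ is equivalent to that for $L(E,s)$; moreover the Ramanujan--Petersson conjecture for $\pi_E$ is a theorem here, since at $p\nmid N_E$ it is exactly the Hasse bound $|a_p|\le 2\sqrt p$ (Eichler--Shimura, Deligne~\cite{Deligne1974}), so no hypothesis beyond those in the statement is needed. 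First I would record the arithmetic translation of the Satake sum: writing $a_p=2\sqrt p\cos\theta_p$ for $p\nmid N_E$, the Satake parameters of $\pi_{E,p}$ are $e^{\pm i\theta_p}$, so $(\alpha_{1,p}+\alpha_{2,p})/\sqrt p=a_p/p$; for the finitely many $p\mid N_E$ there is a single Satake parameter $\alpha_{1,p}=a_p/\sqrt p$ (with $a_p\in\{0,\pm1\}$) and again $(\alpha_{1,p}+\alpha_{2,p})/\sqrt p=a_p/p$. Hence the left-hand side of Theorem~\ref{Satake} applied to $\pi_E$ is precisely $\sum_{p\le x}a_p/p$.

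Next I would pin down the two invariants in Theorem~\ref{Satake}. By the rank part of the Birch and Swinnerton-Dyer conjecture, $m=\ord_{s=1/2}L(s,\pi_E)=\ord_{s=1}L(E,s)=\rk(E)$. For $R(\pi_E)$, note that $E/\mathbb Q$ has trivial nebentypus, so the exterior square $L$-function $L(s,\wedge^2\pi_E)$ is the $L$-function of the trivial central character, i.e.\ $\zeta(s)$, which has a simple pole at $s=1$; thus $m(\wedge^2\pi_E)=1$. On the other hand $L(s,\mathrm{sym}^2\pi_E)$ is holomorphic and non-vanishing at $s=1$, by Gelbart--Jacquet's theory of the symmetric square on $\GL_2$ together with the non-vanishing of cuspidal automorphic $L$-functions on $\Re(s)=1$ — the dihedral (CM) case being handled by an explicit isobaric decomposition of $\mathrm{sym}^2\pi_E$ via automorphic induction from the CM field — so $m(\mathrm{sym}^2\pi_E)=0$. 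Therefore $\nu(\pi_E)=-1$ and $R(\pi_E)=-\nu(\pi_E)=1$, in agreement with the value $R(\pi_\Delta)=1$ used in Corollary~\ref{tauasymptotic}. Feeding $n=2$, $R(\pi_E)=1$ and $m=\rk(E)$ into Theorem~\ref{Satake} then yields
$$
\sum_{p\le x}\frac{a_p}{p}=\left(\frac12-\rk(E)\right)\log\log x+c+o(1)
$$
for all $x$ outside a set of finite logarithmic measure, with $c=c_{\pi_E}$ depending on $E$.

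For the final assertion about Chebyshev's bias in the sense of Definition~\ref{Chebyshev}, I would take $c_p:=a_p/\sqrt p$, so that $\sum_{p\le x}c_p/\sqrt p=\sum_{p\le x}a_p/p$; the symmetry condition $\#\{p\le x:c_p>0\}/\#\{p\le x:c_p<0\}\to 1$ follows from the Sato--Tate theorem, since the Sato--Tate measure on $[0,\pi]$ is invariant under $\theta\mapsto\pi-\theta$, $c_p>0\iff\theta_p<\pi/2$, and the primes with $a_p=0$ have density zero. When $\rk(E)=0$ the coefficient $\tfrac12-\rk(E)$ equals $\tfrac12>0$, so $(c_p)_p$ is biased towards being positive; when $\rk(E)>0$ it is $\le-\tfrac12<0$, so $(-c_p)_p$ is biased towards being positive, i.e.\ $a_p$ is biased towards being negative. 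The only genuinely non-formal step in this argument is the evaluation $R(\pi_E)=1$, and inside it the CM case, where one must verify that no copy of the trivial representation occurs in $\mathrm{sym}^2\pi_E$; everything else is bookkeeping on top of Theorem~\ref{Satake}.
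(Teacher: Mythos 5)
Your proposal is correct and follows essentially the same route as the paper: translate $\sum a_p/p$ into the Satake sum for the normalised representation $\pi_E$, identify $m=\rk(E)$ via BSD and $R(\pi_E)=1$, and feed these into Theorem~\ref{Satake}. The only real point of divergence is how $R(\pi_E)=1$ is established — you derive $\nu(\pi_E)=m(\mathrm{sym}^2\pi_E)-m(\wedge^2\pi_E)=0-1=-1$ from scratch (including the CM case via automorphic induction), whereas the paper simply cites \cite[Example 4.7]{Conrad2005} for the simple zero of $L_2$ at $s=1$; both are valid, and your version is more self-contained.
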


\begin{proof}

\begin{comment}
By Theorem \ref{boundedness}, there exist postive constants $C_1$ and $C_2$ such that 
$$
C_1  \leq (\log x)^{\text{rk}(E)}  \prod_{p \leq x} \det(1-M(p) p^{-\frac{1}{2}})^{-1} \leq C_2
$$
for all $x$ outside a set of finite logarithmic measure.  Taking logarithms yields that 
\begin{equation} \label{tau}
\log C_1 \leq \textrm{rk}(E) \log \log x + \sum_{p \leq x} \frac{\Tr(M(p))}{\sqrt p}+ \sum_{p \leq x} \frac{\Tr(M(p)^2)}{2p}+ \sum_{p \leq x}\sum_{k \geq 3}  \frac{\Tr(M(p)^k)}{kp^{k/2}} \leq \log C_2. 
\end{equation}
We have that 
$$
 \sum_{p \leq x} \frac{\Tr(M(p))}{\sqrt p}=\sum_{p \leq x} \frac{2 \cos \theta_p}{\sqrt p}=\sum_{p \leq x} \frac{a_p}{p}= 
$$
$$
\sum_{p \leq x} \frac{\Tr(M(p)^2)}{2p} \sim  -\frac{1}{2} \log \log x
$$
and 
$$
\sum_{p \leq x}\sum_{k \geq 3}  \frac{\Tr(M(p)^k)}{kp^{k/2}}= O(1). 
$$
Inserting these estimates into Equation \eqref{tau} proves the theorem.
\end{comment}

By the Hasse-Weil bound, we can write $a_p=2 \sqrt p \cos (\theta_p)$ for a unique $\theta_p \in [0, \pi]$. As before,  we have that the normalized $L$-function is given by 
$$
L \left(E, s+\frac{1}{2} \right )= \prod_{p |N_E} (1-a_p p^{-s-\frac{1}{2}})^{-1} \cdot \prod_{p \nmid N_E} (1-e^{i\theta_p} p^{-s} )^{-1}(1-e^{-i\theta_p} p^{-s})^{-1}. 
$$
By \cite[Example 4.7]{Conrad2005}, the second moment $L$-function associated to $L \left(E, s+\frac{1}{2} \right )$  has a simple zero at $s=1$. Thus, noting that the contribution from the factors at the bad primes can be absorbed into the constant, the asymptotic follows from Theorem \ref{Satake} since 
$$
\sum_{{\substack{p \leq x \\ p \nmid  N_E}}} \frac{e^{i \theta_p}+e^{-i \theta_p}}{\sqrt p}= \sum_{{\substack{p \leq x \\ p \nmid  N_E}}} \frac{2 \cos (\theta_p) }{\sqrt p}= \sum_{{\substack{p \leq x \\ p \nmid  N_E}}} \frac{ a_p }{p}. 
$$

\end{proof}

\end{document}